\theoremstyle{plain}
\newtheorem{theorem}{Theorem}[section]
\newtheorem{proposition}[theorem]{Proposition}
\newtheorem{lemma}[theorem]{Lemma}
\newtheorem{corollary}[theorem]{Corollary}
\newtheorem{example}[theorem]{Example}
\newtheorem{remark}[theorem]{Remark}
\numberwithin{equation}{section}
\newcommand{\suchthat}{\:|\:} 
\newcommand{\bC}{\mathbb{C}}
\newcommand{\bQ}{\mathbb{Q}}
\newcommand{\bR}{\mathbb{R}}
\newcommand{\bZ}{\mathbb{Z}}
\newcommand{\mc}[1]{\mathcal{#1}} 
\definecolor{emphcol}{rgb}{0.0, 0.5, 2.0}
\newcommand{\brkd}{\mathrm{Break}} 
\newcommand{\orid}{\mathrm{Orientable}} 
\newcommand{\hilb}{\mathrm{Hilb}} 
\newcommand{\aut}[1]{\mathrm{Aut}(#1)} 
\newcommand{\prim}{\mathrm{prim}} 
\newcommand{\edges}{\mathscr{E}} 
\newcommand{\vertices}{\mathscr{V}} 
\newcommand{\cuts}{\mathscr{C}}
\newcommand{\bonds}{\mathscr{B}} 
\newcommand{\frob}{\mathrm{Frob}} 
\newcommand{\grfrob}{\mathrm{grFrob}} 
\newcommand{\bfx}[1]{\mathbf{x}_{#1}} 
\begin{document}

\title[Zonotopal algebras, orbit harmonics, and DT invariants]{Zonotopal algebras, orbit harmonics, and Donaldson--Thomas invariants of symmetric quivers}
\author{Markus Reineke}
\address{Faculty of Mathematics, Ruhr University Bochum, Bochum, Germany}
\email{\href{mailto:markus.reineke@rub.de}{markus.reineke@rub.de}}

\author{Brendon Rhoades}
\address{Department of Mathematics, University of California San Diego, La Jolla, CA 92093, USA}
\email{\href{mailto:bprhoades@math.ucsd.edu}{bprhoades@math.ucsd.edu}}

\author{Vasu Tewari}
\address{Department of Mathematics, University of Hawaii at Manoa, Honolulu, HI 96822, USA}
\email{\href{mailto:vvtewari@math.hawaii.edu}{vvtewari@math.hawaii.edu}}
\thanks{M.~R. acknowledges financial support from the DFG CRC-TRR 191 ``Symplectic structures in geometry, algebra and dynamics''. B.~R. was partially supported by NSF Grant DMS-1953781. V.~T. acknowledges the support from Simons Collaboration Grant \#855592. }


\begin{abstract}
 We apply the method of orbit harmonics to the set of break divisors and orientable divisors on graphs to obtain the central and external zonotopal algebras respectively.
 We then relate a construction of Efimov in the context of cohomological Hall algebras to the central zonotopal algebra of a graph $G_{Q,\gamma}$ constructed from a symmetric quiver $Q$ with enough loops and a dimension vector $\gamma$.
This provides a concrete combinatorial perspective on the former work, allowing us to identify the quantum Donaldson--Thomas invariants as the Hilbert series of the space of $S_{\gamma}$-invariants of the Postnikov--Shapiro slim subgraph space attached to $G_{Q,\gamma}$.
 The connection with orbit harmonics in turn allows us to give a manifestly nonnegative combinatorial interpretation to numerical Donaldson--Thomas invariants as the number of $S_{\gamma}$-orbits under the permutation action on the set of break divisors on $G_{Q,\gamma}$.
 We conclude with several representation-theoretic consequences, whose combinatorial ramifications may be of independent interest.
\end{abstract}

\maketitle

\section{Introduction}
\label{sec:intro}

Our main ingredient is a connected graph $G$ with vertex set $\vertices(G)$ typically identified with $[n]\coloneqq \{1,\dots,n\}$.
Attached to $G$ are two mathematical notions: its automorphism group $\aut{G}$, and the graphical matroid $M_G$ constructed from the (oriented) incidence matrix $\hat{X}_G$ with column vectors $e_i-e_j$ where $\{i<j\}$ is an edge of $G$.
It is classical that the bases of $M_G$ are in bijection with edge sets of spanning trees of $G$.

At the heart of this article are several spaces built using $\hat{X}_G$ that carry $\aut{G}$-representations which are non-obviously isomorphic. We link notions and ideas with genesis in approximation theory (work of Dahmen--Micchelli on splines \cite{DM85}), graph theory and tropical geometry (sandpile model and divisors on graphs, after \cite{ABKS14,BW18,PS03}), orbit-harmonics (as outlined by Kostant \cite{Kostant} and subsequently employed to great effect by the Macdonald polynomials community taking the cue from Garsia--Procesi \cite{GP92}), and Cohomological Hall algebras (work of Efimov \cite{Efi}, Kontsevich--Soibelman \cite{KS11}, Mozgovoy \cite{Mo13}, and Reineke \cite{Rei12} among others).
Figure~\ref{fig:big_picture} describes the main protagonists in this article and how they link. We proceed to discuss our results.

\begin{figure}
\includegraphics[scale=0.65]{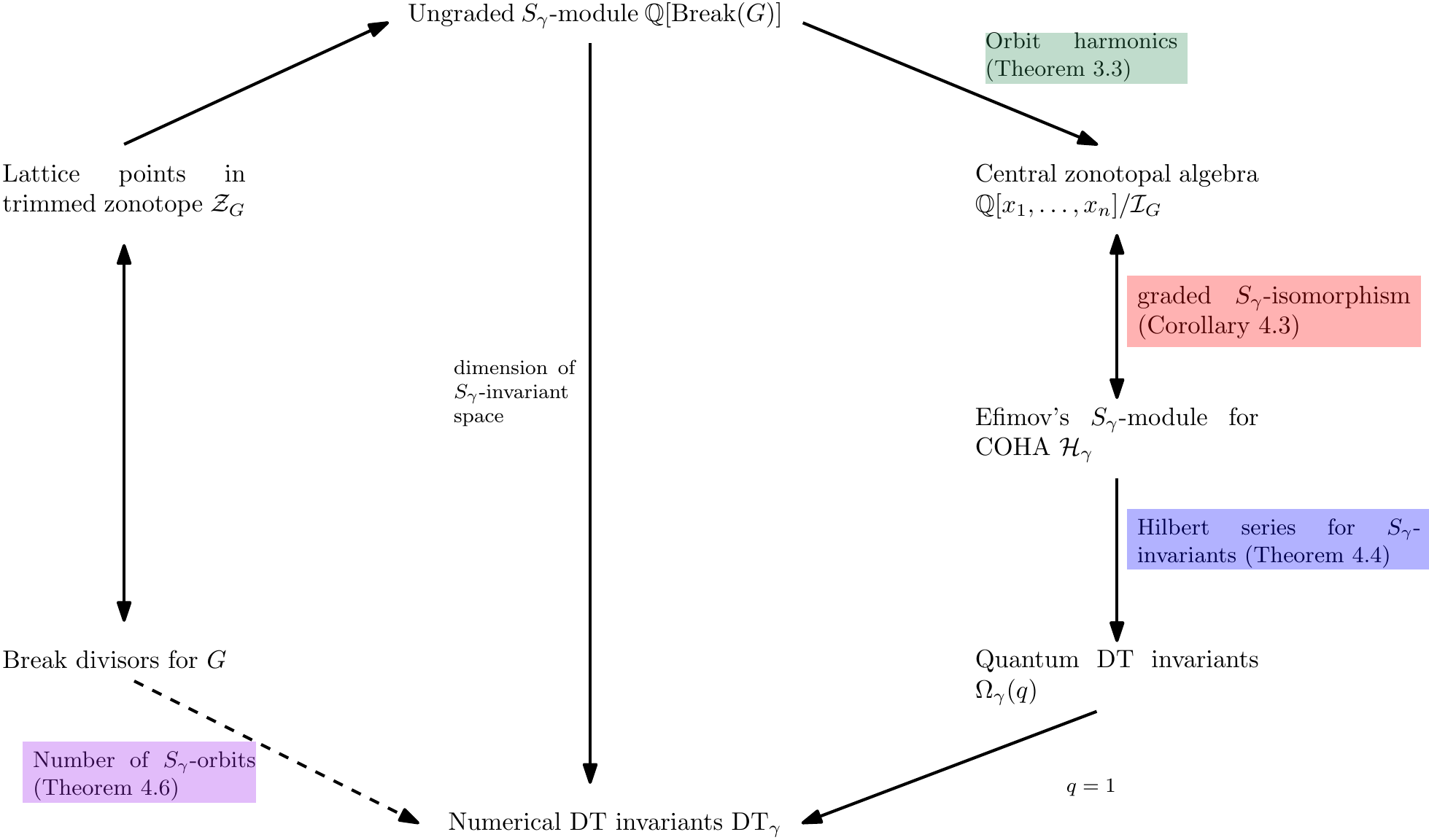}
\caption{Summary of article}
\label{fig:big_picture}
\end{figure}

\subsection{Discussion of main results}
\label{subsec:discussion}

The first space is the \emph{central $P$-space} $\mc{P}(G)\coloneqq \mc{P}(\hat{X}_G)$. In general, the central $P$-space is attached to a collection $X$ of nonzero vectors and forms one half of a pair of spaces $(\mc{P}(X),\mc{D}(X))$.
These spaces were introduced and studied intensively  by the approximation theory community in the 1980s in the context of splines; see \cite{dBHR93} for a book-length treatment, and to \cite[Section 1.2]{HR11} for brief yet insightful historical context. Our primary reference is the fantastic text of De Concini--Procesi \cite{DCP} that gives an in-depth survey of the variety of ways that the study of splines touches seemingly disparate areas in mathematics.
The space $\mc{D}(X)$ is called the \emph{Dahmen--Michelli space} and is the vector space dual to $\mc{P}(X)$.
The space $\mc{P}(X)$ was introduced later \cite{AA88, DR90} and turns out to be easier to work with. As we demonstrate in this article, the space
$\mc{P}(G)$ is hiding in Efimov's work \cite{Efi} resolving a conjecture of Kontsevich--Soibelman \cite{KS11}.

We denote the polynomial ring $\bQ[x_1,\dots,x_n]$ in $n$ variables $x_1$ through $x_n$ by $\bQ[\bfx{n}]$.
Our second space is a quotient $\bQ[\bfx{n}]/\mc{I}(G)$.
Here $\mc{I}(G)$ is a homogeneous ideal generated by powers of certain linear forms determined from $G$.
These ideals are instances of \emph{power ideals}, whose general theory was developed in work of Ardila--Postnikov \cite{AP10} beautifully bringing results obtained and phenomena observed by various authors (for instance \cite{Be10,DM85,HR11,OT94,PS06, SX10, Te02}) under the same umbrella.
Like $\mc{P}(X)$, there is a power ideal $\mc{I}(X)$ defined for collections of nonzero vectors $X$.
In fact, $\mc{P}(X)$ is the Macaulay-inverse to the ideal $\mc{I}(X)$, and this will be pertinent for our purposes.
The quotient $\bQ[\bfx{n}]/\mc{I}(G)$ has been called the \emph{central zonotopal algebra} by Holtz--Ron \cite[Section 3]{HR11}.

It is known \cite{PS03} that the central zonotopal algebra has a monomial basis indexed by \emph{$G$-parking functions}, which are essentially the same as \emph{$q$-reduced divisors} \cite{BN07,BS13}.
These combinatorial objects are in bijection with the set of spanning trees of $G$.
Closely related to these divisors are the \emph{\textup{(}integral\textup{)} break divisors}, and these are the ones relevant to our purposes.
Our point of departure is the fact that spanning trees on $G$ are also in bijection with break divisors on $G$ \cite{ABKS14,MZ08}; see \cite{BBY19,Yu17} for an in-depth study of \emph{geometric} bijections that arise in this framework.

A first hint that the set of break divisors $\brkd(G)$ holds interesting representation-theoretic phenomena comes from considering the case where $G$ is the complete graph $K_n$.
Then $|\brkd(K_n)|=n^{n-2}$, and the fact that $\aut{K_n}$ is the symmetric group $S_n$ translates to $\brkd(K_n)$ carrying an ungraded $S_n$-representation. Given the numerology, one naturally suspects a link to the classical parking function representation $\mathrm{PF}_{n-1}$ \cite{Hai94} of $S_{n-1}$.
Indeed, as shown in \cite{KST21}, and more generally in \cite{KRT21}, the $S_n$ action on $\brkd(K_n)$ when restricted to $S_{n-1}$ is isomorphic to $\mathrm{PF}_{n-1}$; in other words, the $S_n$-representation $\brkd(K_n)$ `extends the parking space'. This allows us establish a link with one of main motivations for this article\textemdash{}
Berget--Rhoades had already proposed a (graded) extension to $\mathrm{PF}_{n-1}$ using the \emph{slim subgraph space} of Postnikov--Shapiro \cite{PS03}.
Konvalinka--Tewari \cite[Conjecture 3.3]{KT21} conjectured that $\brkd(K_n)$ as an ungraded representation is isomorphic to the Berget--Rhoades extension. 
In particular, this would imply that the ungraded Frobenius characteristic of the Berget--Rhoades extension is $h$-positive, which is not at all immediate.
In view of these results, it is natural to inquire about the case of more general graphs.

Surprisingly, the method of \emph{orbit harmonics} allows us to tie these various strands together. Briefly put, one first interprets divisors in $\brkd(G)$ as lattice points in $\bQ^n$. One can attach a quotient $\bQ[\bfx{n}]/\mathsf{T}(G)$, where the ideal $\mathsf{T}(G)$ is homogenous, carrying a graded
$\aut{G}$-representation.
We have the isomorphism $\bQ[\brkd(G)] \cong \bQ[\bfx{n}]/\mathsf{T}(G)$ of ungraded $\aut{G}$-modules, so that
$\bQ[\bfx{n}]/\mathsf{T}(G)$ gives a graded refinement of the permutation module $\bQ[\brkd(G)]$.
 This ideal $\mathsf{T}(G)$ surprisingly turns out to equal the power ideal $\mc{I}(G)$ defining the central zonotopal algebra earlier.

Here is our first main result.
\begin{theorem}[restatement of Theorem~\ref{thm:piecing_things}]
\label{thm:main_1}
We have the following isomorphisms and equalities of ungraded $\aut{G}$-modules
\[
\bQ[\brkd(G)]\cong \bQ[\bfx{n}]/\mathsf{T}(G) = \bQ[\bfx{n}]/\mc{I}(G) \cong \mc{P}(G),
\]
where the middle equality and right isomorphism are in the category of graded $\aut{G}$-modules.
\end{theorem}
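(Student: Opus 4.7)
The plan is to establish the three pieces of the chain separately: orbit harmonics on the left, Macaulay duality on the right, and a combination of explicit vanishing relations with a dimension count in the middle.

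\emph{Left isomorphism (orbit harmonics).} I would realize $\brkd(G)$ as a finite $\aut{G}$-stable subset of $\bQ^n$ via the standard lattice embedding of divisors, on which $\aut{G}$ acts by permuting coordinates. The vanishing ideal $\mathsf{I}(\brkd(G)) \subseteq \bQ[\bfx{n}]$ is $\aut{G}$-stable, as is its associated graded $\mathsf{T}(G)$, the homogeneous ideal generated by the top-degree components of elements of $\mathsf{I}(\brkd(G))$. The orbit-harmonics principle then yields ungraded $\aut{G}$-module isomorphisms
\[
\bQ[\brkd(G)] \;\cong\; \bQ[\bfx{n}]/\mathsf{I}(\brkd(G)) \;\cong\; \bQ[\bfx{n}]/\mathsf{T}(G),
\]
the first by the Chinese remainder theorem (evaluation at points) and the second by the standard identification of a filtered quotient with its associated graded in characteristic zero.

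\emph{Right isomorphism (Macaulay duality).} The identification $\bQ[\bfx{n}]/\mc{I}(G) \cong \mc{P}(G)$ as graded $\aut{G}$-modules is the apolar (Macaulay) duality underlying zonotopal algebra theory \cite{DCP,HR11}: the differentiation pairing $(f,g) \mapsto (f(\partial) g)(0)$ realizes $\mc{P}(G)$ as the Macaulay inverse system of $\mc{I}(G)$. This pairing is $S_n$-equivariant under the diagonal permutation of variables and hence $\aut{G}$-equivariant, so the induced degree-preserving isomorphism is automatic.

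\emph{Central equality, and the main obstacle.} The heart of the theorem is the equality of ideals $\mathsf{T}(G) = \mc{I}(G)$, and this is where essentially all the work lies. My strategy is to prove the inclusion $\mc{I}(G) \subseteq \mathsf{T}(G)$ by hand and then close with a dimension count. For each canonical generator of $\mc{I}(G)$, which is a power $\ell^{k+1}$ of a cut-type linear form (with $k$ the size of the associated cut), I would construct an explicit $f \in \mathsf{I}(\brkd(G))$ whose top-degree component is that generator up to scalar. The combinatorial input is a uniform bound on the integer values $\ell$ can take on break divisors, which follows from the defining inequalities for $\brkd(G)$; raising the appropriate translate of $\ell$ to the power $k+1$ then produces the required vanishing polynomial. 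Granted this inclusion, both quotients $\bQ[\bfx{n}]/\mc{I}(G)$ and $\bQ[\bfx{n}]/\mathsf{T}(G)$ have dimension $|\brkd(G)|$: the former by the classical Hilbert-series formula for the central zonotopal algebra \cite{PS03,HR11}, the latter by orbit harmonics together with the bijection between spanning trees and break divisors \cite{ABKS14,MZ08}. Equality of dimensions then forces equality of ideals. The genuinely nontrivial ingredient is the construction of these vanishing relations, which translates the purely combinatorial break condition into algebraic identities carrying the power-ideal generators as their top-degree parts; everything else is formal given the inputs quoted above.
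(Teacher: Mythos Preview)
Your approach matches the paper's: orbit harmonics on the left, Macaulay duality on the right, and for the equality $\mathsf{T}(G)=\mc{I}(G)$ one proves the inclusion $\mc{I}(G)\subseteq\mathsf{T}(G)$ by exhibiting explicit vanishing polynomials for the cut-type linear forms and then closes with the dimension count $\#\text{ spanning trees}=\dim\bQ[\bfx{n}]/\mc{I}(G)\geq\dim\bQ[\bfx{n}]/\mathsf{T}(G)=|\brkd(G)|$.

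Two details in your central step need fixing, and the first would make the argument fail as written. The generators of $\mc{I}(G)$ are $x_{[n]}$ together with $x_S^{d(S)}$ for bonds $\partial(S)$; the exponent \emph{equals} the cut size, not one more. With exponent $d(S)+1$ you are describing the external ideal $\mc{I}_+(G)$, whose quotient has strictly larger dimension, and your dimension count collapses. Second, the vanishing element is not ``a power of a translate'': the break inequalities applied to $G[S]$ and $G[\bar S]$ force $x_S$ into the integer interval $[g(G[S]),\,g(G)-g(G[\bar S])]$, and a short genus computation shows this interval contains exactly $d(S)$ integers. The vanishing polynomial is therefore the product $\prod_{i=g(G[S])}^{g(G)-g(G[\bar S])}(x_S-i)$ of $d(S)$ distinct linear factors, whose top-degree part is $x_S^{d(S)}$. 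The point that makes everything click is precisely that the interval length equals the cut size, so the associated graded lands on $\mc{I}(G)$ on the nose.
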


In a completely different direction, the case $G=K_n$ again provides some tantalizing hints by way of the following fact.
The number of orbits under the $S_n$-action on $\brkd(K_n)$  equals the numerical Donaldson--Thomas (DT) invariant $\mathrm{DT}_{Q,(n)}$ where $Q$ is the $2$-loop quiver and the dimension vector is $(n)$; see \cite[Theorem 3.7]{KRT21}.
\emph{What is the underlying reason for why this is the case, or is this coincidence purely numerological?}

As we now describe, this story permits a generalization to numerical DT invariants of symmetric quivers $Q$.
We work under the assumption that $Q$ has at least one loop at each vertex, i.e. \emph{has enough loops}. Given a dimension vector $\gamma$, we associate a connected graph $G_{Q,\gamma}$. All our spaces mentioned before make sense for $G_{Q,\gamma}$, even if it is unclear what they are good for.

Efimov \cite[Section 3]{Efi} describes an algebraic construction for a vector space $V$ that freely generates the cohomological Hall algebra $\mc{H}$ of a symmetric quiver $Q$. The space $V$ is built out of various graded spaces $V_{\gamma}^{\prim}$ as $\gamma$ varies over all dimension vectors for $Q$. For a fixed $\gamma$, the dimensions of the graded pieces $V_{\gamma,k}^{\prim}$ give the coefficients of the \emph{quantum} DT invariant $\tilde{\Omega}_{\gamma}(q)$, and the sum of these coefficients, i.e. the evaluation of $\tilde{\Omega}_{\gamma}$ at $q=1$, is the numerical DT invariant $\mathrm{DT}_{Q,\gamma}$.
Each space $V_{\gamma}^{\prim}$ is itself the space of $S_{\gamma}\coloneqq S_{\gamma_1}\times \cdots \times S_{\gamma_k}$ invariants of a larger space that we call $W_{\gamma}$.
We emphasize here that this larger space plays no role in \cite{Efi}.

It turns out that `combinatorializing' Efimov's construction using the graph $G_{Q,\gamma}$ results in exactly studying the space of $S_{\gamma}$-invariants of slim subgraph space $\mc{P}(G_{Q,\gamma})$. In fact, $G_{Q,\gamma}$ is such that $S_{\gamma}$ is a subgroup of $\aut{G_{Q,\gamma}}$. This allows us to put the chain of isomorphisms in Theorem~\ref{thm:main_1} to good effect and obtain:

\begin{theorem}[restatement of Theorems~\ref{thm:graded_multiplicity_quantum_DT} and~\ref{thm:numerical_dt_break}]
\label{thm:main_2}
The dimensions of the graded pieces of $\mathcal{P}(G_{Q,\gamma})^{S_{\gamma}}$ are the coefficients of the quantum DT invariant $\tilde{\Omega}_{\gamma}(q)$. In particular, the numerical DT invariant $\mathrm{DT}_{Q,\gamma}\coloneqq \tilde{\Omega}_{\gamma}(1)$ equals the number of $S_{\gamma}$-orbits on $\brkd(G)$.
\end{theorem}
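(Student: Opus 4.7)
The plan is to bridge Efimov's algebraic construction with the combinatorial picture of Theorem~\ref{thm:main_1} via an explicit identification of graded $S_{\gamma}$-modules. The crux is to show that the ambient space $W_{\gamma}$ whose $S_{\gamma}$-invariants yield $V_{\gamma}^{\prim}$ in \cite{Efi} is, after unwinding definitions, canonically isomorphic as a graded $S_{\gamma}$-module to the central $P$-space $\mathcal{P}(G_{Q,\gamma})$. Once this identification is secured, both halves of the theorem fall out by taking invariants and specializing.

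First I would set up the translation of the combinatorial data: given the symmetric quiver $Q$ with enough loops and dimension vector $\gamma = (\gamma_1,\ldots,\gamma_k)$, I write down the graph $G_{Q,\gamma}$ explicitly, recording the block structure on $\vertices(G_{Q,\gamma})$ indexed by the vertices of $Q$, together with the natural inclusion $S_{\gamma} = S_{\gamma_1}\times\cdots\times S_{\gamma_k} \hookrightarrow \aut{G_{Q,\gamma}}$ that permutes each block. Then I would unpack Efimov's space $W_{\gamma}$ in polynomial terms: its graded structure is built from the arrows and loops of $Q$ in a way that can be matched arrow-by-arrow with the columns $e_i-e_j$ of the incidence matrix $\hat{X}_{G_{Q,\gamma}}$. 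The verification reduces to checking that the defining relations of $W_{\gamma}$ coincide with the kernel-of-powers / zonotopal relations that cut out $\mathcal{P}(G_{Q,\gamma})$, which I expect to handle by comparing Hilbert series (or generating functions) against known formulas for both sides and then upgrading to a canonical isomorphism using the $S_{\gamma}$-equivariance.

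Having identified $W_{\gamma} \cong \mathcal{P}(G_{Q,\gamma})$ as graded $S_{\gamma}$-modules, taking $S_{\gamma}$-invariants gives $\mathcal{P}(G_{Q,\gamma})^{S_{\gamma}} \cong V_{\gamma}^{\prim}$ as graded vector spaces, so the Hilbert series of the left side is precisely $\tilde{\Omega}_{\gamma}(q)$ by the defining property of $V_{\gamma}^{\prim}$ in \cite{Efi}. This proves the graded statement. For the numerical statement, evaluate at $q=1$ to get $\dim_{\bQ} \mathcal{P}(G_{Q,\gamma})^{S_{\gamma}} = \mathrm{DT}_{Q,\gamma}$, and then invoke Theorem~\ref{thm:main_1}: as ungraded $\aut{G_{Q,\gamma}}$-modules, and hence as ungraded $S_{\gamma}$-modules via the inclusion above, $\mathcal{P}(G_{Q,\gamma}) \cong \bQ[\brkd(G_{Q,\gamma})]$. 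Since the right-hand side is a permutation module, the dimension of its $S_{\gamma}$-invariant subspace equals the number of $S_{\gamma}$-orbits on $\brkd(G_{Q,\gamma})$, yielding the claimed combinatorial formula for $\mathrm{DT}_{Q,\gamma}$.

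The main obstacle will be the first step: the comparison between Efimov's $W_{\gamma}$, which is phrased in CoHA-theoretic and representation-variety language, and the polynomial/combinatorial object $\mathcal{P}(G_{Q,\gamma})$. A clean match requires a careful dictionary between the (skew-symmetrized) Euler form of $Q$ at $\gamma$ and the incidence structure of $G_{Q,\gamma}$, together with a verification that the ``enough loops'' hypothesis is exactly what is needed to make the zonotopal and CoHA presentations align. The remaining steps — invariant-taking, specialization at $q=1$, and the orbit-counting consequence of Burnside's lemma applied to a permutation module — are formal once this core identification is in place.
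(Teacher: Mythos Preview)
Your overall architecture matches the paper's: identify Efimov's ambient space $W_{\gamma}$ with $\mathcal{P}(G_{Q,\gamma})$ as graded $S_{\gamma}$-modules, then take invariants for the graded claim and invoke Theorem~\ref{thm:main_1} plus the permutation-module orbit count for the numerical claim. The second and third steps are exactly what the paper does, and your treatment of them is fine.

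The issue is your proposed mechanism for the first step. You suggest matching relations ``kernel-of-powers / zonotopal relations that cut out $\mathcal{P}(G_{Q,\gamma})$'' and, failing that, comparing Hilbert series and upgrading by equivariance. Neither of these is the right lever. First, $\mathcal{P}(G)$ is not presented as a quotient by the power ideal $\mathcal{I}(G)$; it sits inside $A_{\gamma}^{\prim}$ as a direct-sum complement to the \emph{cocircuit} ideal $\hat{I}(G)$. Second, a Hilbert-series match would be circular: you do not know $\hilb(W_{\gamma})$ independently---its invariant part is precisely the DT polynomial you are trying to identify.

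What the paper does instead is identify the \emph{ideals}, not the complements: it shows $J_{\gamma}=\hat{I}(G)$ directly by recognizing each Efimov generator $f_{\delta,\bar{\delta}}$ as the cut polynomial $p_{\partial(S)}$ for the vertex partition of $G_{Q,\gamma}$ induced by the decomposition $\gamma=\delta+\bar{\delta}$, and conversely showing every bond polynomial arises this way up to the $S_{\gamma}$-action. This is a short explicit computation reading off the exponents $a_{ij}$ and $a_{ii}-1$ in $f_{\delta,\bar{\delta}}$ as edge multiplicities in $G_{Q,\gamma}$. Once $J_{\gamma}=\hat{I}(G)$, the known decomposition $A_{\gamma}^{\prim}=\mathcal{P}(G)\oplus \hat{I}(G)$ forces $W_{\gamma}=\mathcal{P}(G)$ on the nose, with no Hilbert-series bookkeeping required. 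Replace your comparison step with this generator-matching argument and the proof goes through.
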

Theorem~\ref{thm:main_2} gives, to the best of our knowledge, the first  manifestly nonnegative combinatorial interpretation for $\mathrm{DT}_{Q,\gamma}$.
It bears emphasizing that this combinatorial interpretation relies crucially on employing orbit harmonics to the point set determined by break divisors, and this is the novelty of our approach.

An algebraic interpretation for $\tilde{\Omega}_{\gamma}(q)$ distinct from that in Theorem~\ref{thm:main_2} was given in the groundbreaking work of Hausel--Letellier--Rodriguez-Villegas \cite{HLRV13} settling a conjecture of Kac \cite{Ka83}.
Recently two other interesting interpretations for $\tilde{\Omega}_{\gamma}(q)$ have been given \cite{DFR21,DM21}, both of which identify the coefficients as the dimensions of some graded piece in an algebra. The objects involved are vertex algebras, and deriving our combinatorial interpretation from these interpretations does not appear to be straightforward.
It would be an interesting endeavor relating these approaches to ours.

\smallskip

\noindent \textbf{Outline of the article.}
Section~\ref{sec:central_zonotopal} introduces the central $P$-space as well as the central zonotopal algebra. Sections~\ref{subsec:graphic_matroid} and \ref{subsec:variations} discuss the only vector configurations that concern us. In this setting, the central $P$-space is the slim subgraph space of Postnikov--Shapiro.
In Section~\ref{sec:orbit_harmonics}, we recall the notion of break divisors, recount the method of orbit harmonics, and assemble the pieces to prove our first main result (Theorem~\ref{thm:main_1} above).
We digress briefly in Section~\ref{subsec:more_orbit_harmonics} and realize internal and external zonotopal algebras by applying orbit harmonics to orientable divisors.
We describe Efimov's construction in Section~\ref{subsec:quantum_dt}, and in Section~\ref{subsec:symmetric_matrix_to_graph} introduce the covering graph $G_{Q,\gamma}$ whose slim subgraph space encodes quantum DT-invariants, thereby establishing the first half of Theorem~\ref{thm:main_2}. The second half is established in Section~\ref{subsec:numerical_DT}. 
Section~\ref{sec:applications} discusses applications of a representation-theoretic flavor.

\section{Central zonotopal algebras}
\label{sec:central_zonotopal}

Let $X\subset \bR^n$ be a finite set of nonzero vectors such that $\mathrm{span}(X)=\bR^n$.
With $v=(v_1,\dots,v_n)\in \bR^n$ we attach the linear form $p_v\coloneqq v_1x_1+\cdots+v_nx_n$, and subsequently define for $Y\subseteq X$ the polynomial
\begin{align}
p_Y=\prod_{y\in Y}p_y.
\end{align}
A \emph{cocircuit} in $X$ is a minimal-under-inclusion subset $Y$  such that  $X\setminus Y$ does not contain a basis for $\bR^n$.
The \emph{cocircuit ideal} $I(X)$ is defined as
\begin{align}
\label{eq:def_cocircuit_ideal}
I(X)\coloneqq \langle p_Y\suchthat Y\subseteq X \text{ a cocircuit}\rangle.
\end{align}
Observe that we could also have defined $I(X)$ to be generated by \emph{all} $p_Y$ where $Y\subseteq X$ are such that $X\setminus Y$ does not contain a basis for $\bR^n$. All such elements are multiples of $p_Y$ for $Y$ a cocircuit.


Define the \emph{central $P$-space} associated to $X$ as
\begin{align}
\mc{P}(X)\coloneqq \bQ\left\lbrace p_Y\suchthat \mathrm{rank}(X\setminus Y)=n\right\rbrace.
\end{align}
In other words, $\mc{P}(X)$ is the span of all $p_Y$ over subsets $Y\subset X$ such that $X\setminus Y$ contains a basis.
Let $b(X)$ denote the number of bases in $X$.
We then have the following important results:
\begin{enumerate}
\item $\bQ[\bfx{n}]=\mc{P}(X)\oplus I(X)$ \cite{DR90},
\item $\dim \mc{P}(X)=b(X)$ \cite[Corollary 2.17]{dBDR91}.
\end{enumerate}
See also \cite[Theorem 3.8]{HR11} for these statements in one place.

Recall that with a graded vector space $A\coloneqq \bigoplus_{d\geq 0}A_d$ one can attach its \emph{Hilbert series} $\hilb(A)$ as follows:
\begin{align}
\hilb(A)=\sum_{d\geq 0}\dim(A_d)q^d.
\end{align}
Since $\mc{P}(X)$ is graded by degree, one may ask for its Hilbert series (or equivalently for that of the quotient $\bQ[\bfx{n}]/I(X)$). This is described in terms of the Tutte polynomial $T_{X}(x,y)$ of the linear matroid determined by $X$.
One definition of $T_{X}(x,y)$ involves the notion of internal and external activity.
We will only need the latter for the Hilbert series we are interested in.

Endow the vectors in $X$ with a total order.
For any basis $B$ in $X$, we say that $v\in X\setminus B$ is \emph{externally active} if $v$ is the smallest element in the minimal dependent subset of $B\cup \{v\}$.
Let the total number of externally active elements with respect to $B$ be $\mathrm{ea}(B)$.
Then we have the following result (see \cite[Proposition 4.14]{AP10} or \cite[Theorem 11.8]{DCP}, both of which apply to the quotient $\bQ[\bfx{n}]/\mc{I}(X)$):
\begin{align}
\label{eq:hilb_p(x)}
\hilb(\mc{P}(X))&=\sum_{\text{ bases } B\subset X}q^{|X|-\mathrm{rank}(X)-\mathrm{ea}(B)}\nonumber\\&=
q^{|X|-\mathrm{rank}(X)}T_X(1,q^{-1}).
\end{align}

As mentioned in the introduction, $\mc{P}(X)$ may also be described as a Macaulay-inverse to a \emph{power ideal} determined by $X$. We proceed to describe this next.
Let $H$ be any \emph{hyperplane} in $X$. Recall that a hyperplane is a maximal collection of vectors in $X$ whose span is $(n-1)$-dimensional. This necessarily determines a linear form $\phi_H$ that vanishes on $H$. Let $m_H\coloneqq m_H(X)$ equal the number of vectors in $X$ that do not lie on $H$.
Consider the ideal $\mc{I}(X)$ (see \cite[Section 3.1]{HR11} or \cite[Section 4.1]{AP10}) defined as
\begin{align}
\label{eq:def_power_ideal}
\mc{I}(X)\coloneqq \langle \phi_H^{m_H} \suchthat H\text{ a hyperplane in } X\rangle.
\end{align}
Let $\partial_i=\frac{\partial}{\partial x_i}$.
We then have (\cite[Theorem 3.8 (5)]{HR11}, \cite[Theorem 2.7]{dBDR91}, or \cite[Theorem 11.25]{DCP}):
\begin{align}
\mc{P}(X) = \{f\in \bQ[\bfx{n}]\suchthat p(\partial_1,\dots,\partial_n)f=0 \,\forall p\in \mc{I}(X)\},
\end{align}
which says that $\mc{P}(X)$ is the Macaulay-inverse to $\mc{I}(X)$.

\begin{example}
\label{ex:central_p_space}
\emph{
Suppose $X$ is the collection of vectors in $\bR^2$ determined by the columns of
\[
X=\left[\begin{array}{ccc}1 & 0 & 1\\ 0 & 1 & -1\end{array}\right].
\]
Let us refer to the vectors read from left to right as $v_1$, $v_2$, and $v_3$, and assume the total order $v_1<v_2<v_3$.
The cocircuits are $\{v_1,v_2\}$, $\{v_1,v_3\}$, and $\{v_2,v_3\}$. Thus
\[
 I(X)=\langle x_1x_2, x_1(x_1-x_2),x_2(x_1-x_2) \rangle.
 \]
One checks that
\[
\mc{P}(X)=\bQ\{1,x_1,x_2\}.
\]
 Observe that one can extract exactly 3 bases for $\bR^2$ from $X$ and that $\dim(\mc{P}(X))=3$. Additionally, we have
 \[
 \hilb(\mc{P}(X))=1+2q=q^{3-2}(2+q^{-1}),
 \]
and so the right-hand side agrees with $qT_X(1;q^{-1})$. Indeed, there are two bases $B$ with $\mathrm{ea}(B)=0$ (i.e. \emph{unbroken bases}) and one with $\mathrm{ea}(B)=1$.
}

\emph{Note further that every vector is also a hyperplane in $X$ and the line spanned by any vector does not contain the remaining two vectors. Thus
\[
\mc{I}(X)=\langle x_1^2, x_2^2, (x_1+x_2)^2\rangle.
\]
We leave it to the reader to check that $\mc{P}(X)$ above is indeed the Macaulay-inverse to $\mc{I}(X)$.}
\end{example}

\subsection{The graphical matroid}
\label{subsec:graphic_matroid}

Let $G$ be a connected graph where we allow multiple edges between distinct vertices but forbid self-loops. This assumption will be implicit throughout the article.
Denote the set of edges (respectively vertices) of $G$ by $\edges(G)$ (respectively $\vertices(G)$). We begin by recalling some standard graph-theoretic jargon; see \cite{BM76} for one source.

Given a nonempty proper subset $S\subset \vertices \coloneqq \vertices(G)$, we denote by $G[S]$ the graph induced on vertices in $S$.
We define the \emph{edge cut} $\partial(S)$ of $G$ associated to $S$ to be the set of edges with one endpoint in $S$ and the other in $\bar{S}\coloneqq \vertices \setminus S$.
We denote the cardinality of $\partial(S)$ by $d(S)$.
We denote the set of  cuts by $\cuts(G)$.
A minimal non-empty edge cut is called a \emph{bond}.
We denote the set of bonds by $\bonds(G)$.
The following property \cite[Exercise 2.2.8]{BM76} of bonds in connected graphs is relevant for us: given a nonempty proper $S\subset \vertices$, the cut $\partial(S)$ is a bond if and only if $G[S]$ and $G[\bar{S}]$ are both connected.

We now isolate the case of our interest.
Consider the set of vectors $X_G$ determined by a connected graph $G$ on the vertex set $[n]$.
Let $e_i$ denote the $i$th standard basis vector in $\bR^n$.
For any edge $\{i,j\}\in \edges(G)$ where $i<j$, the vector $e_i-e_j$ belongs to $X_G$ (as many times as there is an edge between $i$ and $j$).
Include the vector $e_1+\cdots+e_n$ as well, thereby
ensuring $\mathrm{span}(X_G)=\bR^n$.

Suppose $Y\subset X_G$ is such that the corresponding set of edges in $G$ has the property that its complement in $\edges(G)$ results in a connected subgraph of $G$.
We say that $Y$ (or the subgraph in $G$ determined by the associated set of edges) is \emph{slim}.
It is clear that if $Y$ is slim, then $X_G\setminus Y$ contains a basis for $\bR^n$.
Indeed one only needs to take a spanning tree in the complement (which is connected) and throw in the vector $e_1+\cdots+e_n$ to get a basis. In particular, slim subgraphs do not give cocircuits.
Recall that a cocircuit is a minimal subset $Y\subset X_G$ such that $X_G\setminus Y$ does not contain a basis.
Clearly, $e_1+\cdots+e_n$ is a cocircuit.
It is straightforward to see that all other cocircuits correspond to collections of vectors corresponding to edges of bonds in $G$.\footnote{Put differently, a bond is code for \emph{minimally non-slim}.}

In summary we have
\begin{align}
I(G)\coloneqq I(X_G)=\langle x_1+\cdots+x_n, p_Y \text{ for } Y \text{ a bond}\rangle.
\end{align}
We thus have
\begin{align}
\mc{P}(G)\coloneqq \mc{P}(X_G)= \bQ\{p_Y\suchthat Y \text{ slim}\}.
\end{align}

We will refer to $\mc{P}(G)$ as the \emph{\textup{(}Postnikov--Shapiro\textup{)} slim-subgraph space} of $G$.
Standard matroid-theoretic techniques allow us to extract a basis for this space after endowing the set of edges with a total order and consider externally active edges for spanning trees.

We now describe the power ideal $\mc{I}(G)\coloneqq \mc{I}(X_G)$ as well. To this end we need to describe all hyperplanes in $X_G$, i.e. all maximal subsets of $X_G$ such that the resulting rank is $n-1$.
These are exactly the complements of cocircuits.

Henceforth set $x_S\coloneqq \sum_{i\in S}x_i$.
Consider first the cocircuit $Y$ determined by $e_1+\cdots+e_n$. The vectors in the complement $X_G\setminus Y$ all lie on the hyperplane $x_{[n]}=0$. Thus we get that $x_{[n]}\in \mc{I}(G)$.
Now consider the cocircuit determined by vectors corresponding to edges of a cut $\partial(S)$.
It is immediate that collection of vectors in $X_G\setminus Y$ all lie on the hyperplane
\begin{align*}
|\bar{S}|\sum_{i\in S}x_i-|S|\sum_{i\in \bar{S}}x_i=0.
\end{align*}
Taking into account that $x_{[n]}\in\mc{I}(G)$, we thus  infer that
$x_{S}^{d(S)}\in \mc{I}(G)$,
and, by symmetry, that
$x_{\bar{S}}^{d(S)}\in \mc{I}(G)$.
We have thus obtained the following presentation:
\begin{align}
\mc{I}(G)=\langle x_{[n]},  x_S^{d(S)} \text{ where $\partial(S)\in\bonds(G)$}\rangle.
\end{align}

\begin{example}
\emph{Consider $G$ in Figure~\ref{fig:k4-e} with $\vertices(G)=[4]$. Then $X_G$ is:
\begin{align*}
\left[\begin{array}{cccccc}1 & 1 &0 &0 &0 & 1\\
-1 & 0 &1 &1 &0 & 1\\
0 & 0 &-1 &0 &1 & 1\\
0& -1 &0 &-1 &-1 & 1\\
\end{array}\right].
\end{align*}
In this case, we have
\[
\mc{I}(G)=\langle x_1^2,x_2^3,x_3^2,x_4^3, (x_1+x_2)^3,(x_1+x_4)^3,x_1+x_2+x_3+x_4\rangle.
\]
It may be checked that
\[
\hilb\left(\bQ[\bfx{4}]/\mc{I}(G)\right)=1+3q+4q^2,
\]
which evaluates to $8$ at $q=1$. The latter is also the number of spanning trees of $G$.
}
\end{example}
\begin{figure}
\includegraphics[scale=0.7]{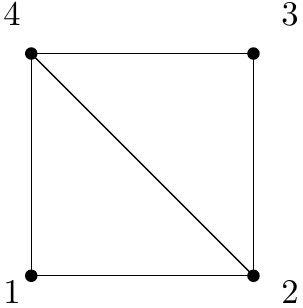}
\caption{The graph $K_4-\text{edge}$.}
\label{fig:k4-e}
\end{figure}

The quotient $\bQ[\bfx{n}]/\mc{I}(G)$ is essentially what Postnikov--Shapiro \cite[\S 3]{PS03} call $\mc{B}_G$ in their seminal article, even though the presentation here appears different.
Indeed, the Postnikov--Shapiro ideal only involves the variables $x_1$ through $x_{n-1}$, but this may be achieved by eliminating the variable $x_n$ given the linear relation $x_1+\cdots+x_n\in \mc{I}(G)$.
Other than that, observe that Postnikov--Shapiro consider linear forms coming from \emph{all} subsets $S\subseteq[n-1]$ whereas we have only included those $S$ that give rise to bonds $\partial(S)$.
It turns out that the resulting ideals are in fact the same; this follows from applying \cite[Theorem 4.17]{AP10} with $\mc{A}$ equal to the collection of vectors coming from edges of $G$ and $k$ equal to $-1$.\footnote{This is the value of $k$ that corresponds to the central zonotopal algebra. The cases $k=0$ and $k=-2$ give the \emph{external} and \emph{internal} zonotopal algebras respectively.}

Postnikov and Shapiro give a monomial basis for $\mc{B}_G$ indexed by \emph{$G$-parking functions} \cite[Theorem 3.1]{PS03}, and show that the Hilbert series is a specialization of the Tutte polynomial of $G$ \cite[Theorem 3.3]{PS03}.
It is also known that \emph{$G$-parking functions} are \emph{$q$-reduced divisors} on $G$ (once a distinguished vertex $q$ is chosen); it is a different set of divisors on graphs that is relevant for our purposes and we introduce those in Section~\ref{sec:orbit_harmonics}.

\subsection{Two variations on this theme}
\label{subsec:variations}
Our treatment of the graphical matroid is slightly non-standard, and we discuss two other ways in which it is usually presented. As we hope to demonstrate, all perspectives have their benefits.

In the first perspective, for a connected graph $G$ with $\vertices(G)$ identified with $[n]$, one constructs an $n\times |\edges(G)|$ matrix with $e_i-e_j$ for edge $\{i<j\}\in \edges(G)$. This is a rank $n-1$ matrix with columns in $\bR^n$. If one strikes out the last row (i.e. the row corresponding to the vertex $n$ in $G$), then one obtains a full rank matrix with columns in $\bR^{n-1}$. If we denote this collection of vectors in $\bR^{n-1}$ by $\check{X}_G$, then it is not hard to verify that the power ideal $\check{\mc{I}}(G)\coloneqq \mc{I}(\check{X}_G) \subset \bQ[x_1,\dots,x_{n-1}]$ has the following presentation:
\begin{align}
\check{\mc{I}}(G)=\langle  x_S^{d(S)} \text{ where $\partial(S)$ is a bond with $S\subseteq [n-1]$}\rangle.
\end{align}
From this we have the following isomorphism
\begin{align}
\label{eq:strike_out_last_row}
\bQ[\bfx{n}]/\mc{I}(G) \cong \bQ[\bfx{n-1}]/\check{\mc{I}}(G),
\end{align}
with the latter being more in the spirit of \cite{PS03}. The reader is welcome to check that Example~\ref{ex:central_p_space} applies this construction with $G$ equal to $K_3$. We record an observation here (to be addressed in more detail later): for $G=K_n$, the quotient on the right hand side carries an $S_{n-1}$-action, but `hides' the $S_n$-action, unlike the quotient on the left hand side.
\begin{remark}
  \emph{There is nothing special about striking out the last row. Indeed one can strike out \emph{any} row and obtain a rank $n-1$ matrix that determines an isomorphic matroid, and a quotient isomorphic to that in~\eqref{eq:strike_out_last_row}.}
\end{remark}

We now describe our second perspective. The reader may find our insistence on working with a collection of vectors $X$ in $\bR^n$ that span $\bR^n$ as stringent.
Indeed as discussed in the beautiful book of De Concini--Procesi \cite{DCP}, one can work with a collection $X$ that spans an $s$-dimensional real space $V$. All results in this section still hold with the simple adjustment that one work with polynomial functions on $V$ (in the case $V=\bR^n$, this translates to working in $\bQ[\bfx{n}]$.)

With this guidance, we now return to our collection $\hat{X}_G\coloneqq X_G\setminus \{e_1+\cdots+e_n\}$. In other words we omit the vector that was added to ensure $\mathrm{span}(X_G)=\bR^n$. Since $G$ is connected, we have that $\mathrm{span}(\hat{X}_G)$ is the hyperplane $V$ in $\bR^n$ cut out by $x_1+\cdots+x_n=0$.
The ring of regular functions on $V$ can be identified by the polynomial ring $\bQ[x_i-x_j\suchthat i<j]$.
We now have
\begin{align}
\hat{I}(G)\coloneqq I(\hat{X}_G)&=\langle p_Y \text{ for } Y \in \bonds(G)\rangle\\
\mc{P}(G)=\mc{P}(\hat{X}_G)&=\bQ\{p_Y\suchthat Y \text{ slim}\}.
\end{align}
Note here that the ideal $\hat{I}(G)$ is taken in $\bQ[x_i-x_j\suchthat i<j]$.
By the general result \cite[Corollary 11.23]{DCP}, we have the direct sum decomposition
\begin{align}
\label{eq:central_p_decomposition}
\bQ[x_i-x_j\suchthat i<j]=\mc{P}(G)\oplus \hat{I}(G).
\end{align}
Note that the $\bQ$-vector spaces $\mc{P}(X_G)$ and $\mc{P}(\hat{X}_G)$ are in fact the same, which is why we have chosen to refer to both as $\mc{P}(G)$.

In summary, throwing in the vector $e_1+\cdots+e_n$ is harmless to our story, and omitting it will make the link to Efimov's construction in the context of the COHA of symmetric quivers a bit more transparent.

\section{Orbit harmonics and break divisors}
\label{sec:orbit_harmonics}
\subsection{Break divisors}
 Let $G$ be a connected graph.
 We define the \emph{genus} of $G$ to be $g(G)\coloneqq |\edges(G)|-|\vertices(G)|+1$.
 A \emph{divisor} $D$ on $G$ is an assignment $D:\vertices(G)\to \bZ$.
 The sum $\sum_{v\in \vertices(G)}D(v)$ is called the \emph{degree} of $D$ and denoted by $\deg(D)$.
 If $D(v)\geq 0$ for all $v\in \vertices(G)$, then we say that $D$ is \emph{effective}.
If $H$ is a connected subgraph of $G$, then a divisor $D$ on $G$ naturally restricts to a divisor $D|_H$ on $H$.
An effective divisor $D$ of degree $g(G)$ is said to be a \emph{break} divisor if for every connected subgraph $H$ of $G$ we have
\begin{align}
\deg(D|_H)\geq g(H).
\label{eq:def_break_divisor}
\end{align}
We denote by $\brkd(G)$ the set of break divisors on $G$.
It is known that $|\brkd(G)|$ equals the number of spanning trees on $G$ \cite[Theorem 4.25]{ABKS14}.

By identifying $\vertices(G)$ with $[n]$, we can, and will, interpret elements of $\brkd(G)$ as lattice points in $\bR^{n}$.
Consider the \emph{graphical zonotope} $\mc{Z}_G\in\bR^n$ determined by $G$ by consider the Minkowski sum of line segments $[e_i,e_j]$ for every edge $\{i,j\}\in E(G)$.
Let $\Delta_{n-1,n}$ denotes the $(n-1)$-th hypersimplex determined by taking the convex hull of the $S_n$-orbit of the point $(1^{n-1},0)\in \bR^n$.
This given, break divisors are lattice points in the \emph{trimmed graphical zonotope} $\mc{Z}_G-\Delta_{n-1,n}$ \cite[Proposition 2.1]{KRT21}. 
\begin{example}
\label{ex:break_divisors_orbits}
\emph{
Consider the graph $G$ in Figure~\ref{fig:k23}.
Elements of $\brkd(G)$ are tuples $(a,b,c,d,e)\in \mathbb{Z}_{\geq 0}^5$ that satisfy \eqref{eq:def_break_divisor}.
Any subgraph that is a tree gives a trivial constraint. Furthermore, note the $S_2\times S_3$ symmetry in $G$. So given any break divisor we can permute $a$ and $b$ (respectively $c$, $d$, and $e$) to get another break divisor. Thus (up to $S_2\times S_3$ symmetry) the only relevant inequality is
\begin{align*}
a+b+c+d\geq 1
\end{align*}
on top of the equality $a+b+c+d+e=2$. In summary we obtain four orbit representatives for $S_2\times S_3$-action on $\brkd(G)$:
\begin{align*}
(2,0,0,0,0), (1,1,0,0,0), (1,0,1,0,0), (0,0,1,1,0).
\end{align*}
We thus obtain $|\brkd(G)|=12$, which agrees with the number of spanning trees of $G$.\footnote{Recall that the number of spanning trees of the complete bipartite graph $K_{m,n}$ equals $m^{n-1}n^{m-1}$.}}
\end{example}

\begin{figure}
\includegraphics[scale=0.8]{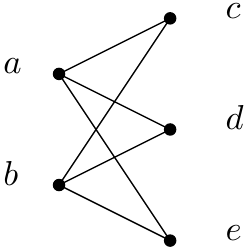}
\label{fig:k23}
\caption{The complete bipartite graph $K_{2,3}$.}
\end{figure}

In the example earlier, we incorporated the $S_2\times S_3$ symmetry into our computation to make things concise. In general, from the inequalities defining break divisors, it is clear that  $\aut{G}$ acts on $\brkd(G)$ by permutations.\footnote{It is entirely possible that a larger group acts on $\brkd(G)$. For instance if one takes an $n$-cycle, then $\aut{G}$ is the cyclic group $\bZ/n\bZ$. On the other hand, the set of break divisors is given by the vertices of the standard simplex, and thus enjoys $S_n$-symmetry.}
The situation is ideal for invoking the method of orbit harmonics.

\subsection{Orbit harmonics}
The method of {\em orbit harmonics} gives a technique for turning an ungraded permutation representation of some linear group $\mathscr{G}$
acting on a finite point
locus $Y$ into a graded $\mathscr{G}$-module.
Orbit harmonics was introduced by Kostant \cite{Kostant} and has seen subsequent application (for example)
by Garsia--Procesi \cite{GP92} in the context of Springer fibers and by
Haglund--Rhoades--Shimozono \cite{HRS18} in the context of Macdonald-theoretic delta
operators. See \cite{Rh22} for a survey of recent developments.

Consider a finite point set $Y \subset \bQ^n$ and let $\mathsf{I}(Y) \subseteq \bQ[x_1, \dots, x_n]$
be the ideal of polynomials which vanish on $Y$. The quotient of $\bQ[x_1, \dots, x_n]$ by the ideal
$\mathsf{I}(Y)$ has vector space dimension $\dim_{\bQ}\left(\bQ[\bfx{n}]/\mathsf{I}(Y)\right)=|Y|$.
If the set $Y$ is $\mathscr{G}$-stable for some group $\mathscr{G}\subset \mathrm{GL}_n(\bQ)$, then we have an isomorphism of ungraded
$\mathscr{G}$-modules
\[
\bQ[\bfx{n}]/\mathsf{I}(Y)\cong_{\mathscr{G}} \bQ[Y].
\]
 Orbit harmonics produces a quotient that will afford the structure of a {\em graded} $\mathscr{G}$-module.
 Given a nonzero polynomial $f \in \bQ[x_1, \dots, x_n]$, let $\tau(f)$ be the top degree homogeneous component of $f$.
 That is, if $f = f_d + \cdots + f_1 + f_0$ with $f_i$ homogeneous of degree $i$ and $f_d \neq 0$, then we have $\tau(f) = f_d$.
 Define an ideal $\mathsf{T}(Y) \subseteq \bQ[\bfx{n}]$ by
 \[
 \mathsf{T}(Y) = \langle \tau(f) \,:\, f \in \mathsf{I}(Y), \, \, f \neq 0 \rangle.
 \]
The $\mathscr{G}$-module isomorphism given above extends to a chain
\[
\bQ[\bfx{n}]/\mathsf{T}(Y) \cong_{\mathscr{G}}
\bQ[\bfx{n}]/\mathsf{I}(Y)\cong_{\mathscr{G}} \bQ[Y],
\]
with the added feature that the left hand side is a graded $\mathscr{G}$-module.

\subsection{Applying orbit harmonics to $\brkd(G)$}
\label{subsec:gp_to_brkd}
Throughout this subsection, our group $\mathscr{G}$ will be the automorphism group $\aut{G}$ of our graph $G$ and our point set $Y$
carrying the $\aut{G}$ action will be the set $\brkd(G)$ of break divisors of $G$.
For brevity we will refer to the ideals $\mathsf{I}(\brkd(G))$ and $\mathsf{T}(\brkd(G))$ by $\mathsf{I}(G)$ and $\mathsf{T}(G)$ respectively.
Our strategy is as follows.
\begin{enumerate}
\item Produce a family of polynomials indexed by $\bonds(G)$ that vanish on $\brkd(G)$.
\item Consider $\tilde{\mathsf{T}}(G)$ the ideal generated by top degree homogeneous summands from the polynomials in the aforementioned family.
\item Use the inclusion $\tilde{\mathsf{T}}(G) \subset \mathsf{T}(G)$ to infer on the one hand that
\[
\dim\left(\bQ[\bfx{n}]/\tilde{\mathsf{T}}(G)\right) \geq \dim\left(\bQ[\bfx{n}]/\mathsf{T}(G)\right).
\]
On the other hand, it will transpire that the $\tilde{\mathsf{T}}(G)$ equals the power ideal $\mc{I}(G)$ and thus its dimension equals the number of spanning trees in $G$. Additionally, the right hand side equals $|\brkd(G)|$. This will allow us to conclude that $\tilde{\mathsf{T}}(G)=\mathsf{T}(G)$.
\end{enumerate}

\begin{proposition}
\label{prop:GP_break_divisors}
We have the equality of ideals
\[
\mathsf{T}(G)=\mc{I}(G).
\]
\end{proposition}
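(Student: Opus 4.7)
The plan is to follow the three-step strategy outlined just before the proposition: produce explicit polynomials in $\mathsf{I}(G)$ whose top-degree components are the generators of $\mc{I}(G)$, deduce $\mc{I}(G) \subseteq \mathsf{T}(G)$, and then match dimensions on both sides.

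First, the easy generator. Every break divisor has total degree $g(G)$ by definition, so the linear polynomial $x_1 + \cdots + x_n - g(G)$ vanishes on $\brkd(G)$; its top-degree component is $x_{[n]}$, so $x_{[n]} \in \mathsf{T}(G)$. Next, for each bond $\partial(S) \in \bonds(G)$, I would argue that the linear form $x_S = \sum_{i \in S} x_i$ assumes exactly $d(S)$ consecutive integer values on $\brkd(G)$. Since $\partial(S)$ is a bond, both $G[S]$ and $G[\bar S]$ are connected, so the break divisor inequality applied to these two induced subgraphs gives $x_S(D) \geq g(G[S])$ and $x_{\bar S}(D) \geq g(G[\bar S])$ for every $D \in \brkd(G)$. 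Combining the second of these with $x_S(D) + x_{\bar S}(D) = g(G)$ and the elementary identity $g(G) - g(G[\bar S]) = g(G[S]) + d(S) - 1$ (which reduces to $|\edges(G)| = |\edges(G[S])| + |\edges(G[\bar S])| + d(S)$) yields the upper bound $x_S(D) \leq g(G[S]) + d(S) - 1$. Therefore the degree-$d(S)$ polynomial
\[
\prod_{k=0}^{d(S)-1}\bigl( x_S - g(G[S]) - k \bigr)
\]
lies in $\mathsf{I}(G)$, and its top-degree component is $x_S^{d(S)}$. Hence $x_S^{d(S)} \in \mathsf{T}(G)$ for every bond, and the whole power ideal $\mc{I}(G)$ is contained in $\mathsf{T}(G)$.

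For the reverse inclusion I would appeal to dimension counting. By the general theory of orbit harmonics, $\dim_{\bQ}(\bQ[\bfx{n}]/\mathsf{T}(G)) = |\brkd(G)|$, which equals the number of spanning trees of $G$. On the other side, $\dim_{\bQ}(\bQ[\bfx{n}]/\mc{I}(G)) = \dim_{\bQ} \mc{P}(G) = b(X_G)$, by the facts recalled in Section~\ref{sec:central_zonotopal}. Every basis of $X_G$ must contain $e_1 + \cdots + e_n$, since the remaining vectors all lie on the hyperplane $x_1 + \cdots + x_n = 0$; hence bases of $X_G$ are in bijection with spanning trees of $G$ and $b(X_G)$ also equals the number of spanning trees. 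The two quotients therefore have the same finite dimension, which upgrades the containment $\mc{I}(G) \subseteq \mathsf{T}(G)$ to an equality.

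The central substantive step is pinning down the image of $x_S$ on $\brkd(G)$: both the lower and upper bounds rely on connectivity of the two pieces, so the bond hypothesis on $\partial(S)$ is indispensable and cannot be weakened to an arbitrary cut. Once the interval $[g(G[S]), g(G[S]) + d(S) - 1]$ of possible values is in place, the annihilating product polynomial is forced on us and the rest of the argument collapses to the dimension match outlined above.
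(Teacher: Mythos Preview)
Your proof is correct and follows essentially the same route as the paper: bound $x_S$ on break divisors via the defining inequality applied to the connected induced subgraphs $G[S]$ and $G[\bar S]$, take the product polynomial to get $x_S^{d(S)}\in\mathsf{T}(G)$, and finish by matching dimensions (spanning trees on both sides). The only difference is that the paper additionally shows $x_S$ attains every value in the interval $[g(G[S]),\,g(G[S])+d(S)-1]$, but explicitly flags this as unnecessary for the proposition; your streamlined version omits that digression, which is fine.
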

\begin{proof}
As before, set $x_S\coloneqq \sum_{i\in S}x_i$ for $S\subset \vertices(G)=[n]$.
Since the degree of any break divisor is $g(G)$, we know that $x_{[n]}-g(G)\in \mathsf{I}(G)$.
Now consider a nonempty proper $S\subset \vertices(G)$ such that $\partial(S)\in \bonds(G)$.  We will be interested in the values $x_S(p)$ as $p$ ranges over $\brkd(G)$.

Since $G[S]$ is connected, we  get the lower bound
\begin{align}
x_S(p)\geq g(G[S]).
\end{align}
Since $G[\bar{S}]$ is also connected, and because $x_S(p)+x_{\bar{S}}(p)=g(G)$, we get the upper bound
\begin{align}
x_S(p)\leq g(G)-g(G[\bar{S}]).
\end{align}
Thus we see that $x_S(p)\in [g(G[S]), g(G)-g(G[\bar{S}])]$.

Observe also that there are $g(G)-g(G[S])-g(G[\bar{S}])+1$ integers in this interval.
This quantity is exactly the size $d(S)$ of the bond $\partial(S)$. Indeed, using the definition of genus, one may check that
\begin{align}
g(G)-g(G[S])-g(G[\bar{S}])+1&=|\edges(G)|-|\vertices(G)|-|\edges(G[\bar{S}])|+|\vertices(G[\bar{S}])|-|\edges(G[S])|+|\vertices(G[S])|\nonumber\\
&=|\edges(G)|-|\edges(G[\bar{S}])|-|\edges(G[S])|=d(S).
\end{align}

We now proceed to show that $x_S(p)$ achieves all values in $[g(G[S]),g(G[S])+d(S)]$.
\emph{While we do not need the full strength of this preceding statement to establish the proposition, we have retained it in the hope that it might be of independent combinatorial interest.}

We  first show that $x_S$ achieves the values $g(G)$ and $g(G)+d(S)$.
To this end, we need the following alternative description \cite[Section 3]{ABKS14} (see also \cite[Section 2.2]{BW18}) for break divisors on $G$ given a spanning tree $T$.
For every edge in $\edges(G)\setminus \edges(T)$, assign a chip to one of its two endpoints. The sequence tracking the number of chips at each vertex is always  a break divisor.
Note also that flipping a given assignment across an edge produces another break divisor, which differs from the initial break divisor by an appropriate type $A$ root $e_i-e_j$.
We thus get $2^{|\edges(G)\setminus \edges(T)|}$ break divisors for any choice of spanning tree $T$.

For the bond $\partial(S)$, pick spanning trees $T_{S}$ and $T_{\bar{S}}$ for $G[S]$ and $G[\bar{S}]$ respectively.
Let $e\in \partial(S)$ be any edge such that $T_S\cup \{e\}\cup T_{\bar{S}}$ is a spanning tree in $G$.
We now apply the procedure above to produce a break divisor $p$ with $x_S(p)=g(G[S])$.

Note that
\begin{align}
\edges(G)\setminus \edges(T)=(\edges(G[S])\setminus \edges(T_S)) \cup (\partial(S)\setminus \{e\})\cup (\edges(G[\bar{S}])\setminus \edges(T_{\bar{S}})).
\end{align}
Consider edges  in $\edges(G)\setminus \edges(T)$ depending on which of the three sets on the right-hand side they belong to.
If an edge is in $(\partial(S)\setminus \{e\})$, then assign a chip to the endpoint in $\bar{S}$. Otherwise, assign randomly.
Since vertices in $S$ get assigned chips only from edges in $\edges(G[S])\setminus \edges(T_S)$, our procedure is guaranteed to have a produced a break divisor $p$ on $G$ that restricts to a break divisor on $G[S]$.  Thus we must have $x_S(p)=g(G[S])$.

A symmetric argument with the change that for edges  in $(\partial(S)\setminus \{e\})$, assign a chip to the endpoint in $S$ produces a break divisor $p$ on $G$ that restricts to a break divisor on $G[\bar{S}]$. This in turn means $x_S(p)=g(G)-g(G[\bar{S}])$. Thus at this stage we have produced break divisors that attain the bounds $g(G[S])$ and $g(G)-g(G[\bar{S}])$ respectively.

It is now easy to see how one may interpolate between these extremes one step at a time whilst incrementing $x_S$ by $1$ at each step.
To attain the value $g(G[S])+k$ where $0< k< d(S)-1$, for $k$ edges in $\partial(S)\setminus \{e\}$, assign a chip to the endpoint in $S$. Otherwise assign to the endpoint in $\bar{S}$.

It follows from the preceding discussion that
\[
\prod_{i=g(G[S])}^{g(G)-g(G[\bar{S}])} (x_S-i)\in \mathsf{I}(G).
\]
for any nonempty proper $S\subset \vertices(G)$ for which $\partial(S)\in \bonds(G)$.
Next note that the top degree homogeneous summands from such polynomials above generate exactly the power ideal $\mc{I}(G)$. Thus we must have  the inclusion
\begin{align}
\mc{I}(G)\subset \mathsf{T}(G),
\end{align}
which in turn implies the inequality
\begin{multline}
\text{ \# of spanning trees}= \dim\left( \bQ[\bfx{n}]/\mc{I}(G) \right)\geq \dim\left( \bQ[\bfx{n}]/\mathsf{T}(G) \right)=|\brkd(G)|.
\end{multline}
Given that the quantities on either extreme are in fact equal \cite[Theorem 4.25]{ABKS14}, the claim follows.
\end{proof}

\begin{theorem}
\label{thm:piecing_things}
We have the following isomorphisms and equalities of ungraded $\aut{G}$-modules:
\[
\bQ[\brkd(G)]\cong \bQ[\bfx{n}]/\mathsf{T}(G) = \bQ[\bfx{n}]/\mc{I}(G) \cong \mc{P}(G).
\]
where the middle equality and right isomorphism are in the category of \emph{graded} $\aut{G}$-modules.
\end{theorem}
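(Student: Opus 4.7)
The plan is to assemble the three parts of the statement from: orbit harmonics applied to the point set $\brkd(G)\subset \bQ^n$, Proposition~\ref{prop:GP_break_divisors}, and the classical direct sum decomposition $\bQ[\bfx{n}]=\mc{P}(G)\oplus I(G)$ recorded at the start of Section~\ref{sec:central_zonotopal}. First I would record the $\aut{G}$-equivariance underlying everything: the group $\aut{G}$ acts on $\bQ^n$ by permuting coordinates according to its action on $\vertices(G)=[n]$ and stabilizes $\brkd(G)$, since the defining inequalities depend only on the isomorphism type of subgraphs. Consequently, the vanishing ideal $\mathsf{I}(G)$ and its top-degree initial ideal $\mathsf{T}(G)$ are $\aut{G}$-stable, and the power ideal $\mc{I}(G)$ is also $\aut{G}$-stable from its presentation in terms of the forms $x_S^{d(S)}$, since automorphisms permute the bonds of $G$.

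\textbf{Leftmost isomorphism and middle equality.} With this equivariance in hand, the standard orbit harmonics package yields
\[
\bQ[\brkd(G)] \cong \bQ[\bfx{n}]/\mathsf{I}(G) \cong \bQ[\bfx{n}]/\mathsf{T}(G)
\]
as ungraded $\aut{G}$-modules; the grading only becomes available once we pass to $\mathsf{T}(G)$, which explains why the leftmost isomorphism in the theorem is only ungraded. The middle equality $\mathsf{T}(G)=\mc{I}(G)$ is Proposition~\ref{prop:GP_break_divisors}, and since both ideals are homogeneous and $\aut{G}$-stable, it lifts to an equality of graded $\aut{G}$-modules on the quotients.

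\textbf{Rightmost isomorphism.} For $\bQ[\bfx{n}]/\mc{I}(G)\cong \mc{P}(G)$ as graded $\aut{G}$-modules, I would invoke the $\aut{G}$-equivariant decomposition $\bQ[\bfx{n}]=\mc{P}(G)\oplus I(G)$. Both summands are $\aut{G}$-stable: the spanning set $\{p_Y : Y\text{ slim}\}$ of $\mc{P}(G)$ is permuted up to sign under any $\sigma\in \aut{G}$ (which sends $e_i-e_j$ to $\pm(e_{\sigma(i)}-e_{\sigma(j)})$ and slim subgraphs to slim subgraphs), and $I(G)$ is preserved for the same reason, with $x_{[n]}$ being outright invariant. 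The composition $\mc{P}(G)\hookrightarrow \bQ[\bfx{n}]\twoheadrightarrow \bQ[\bfx{n}]/I(G)$ is then a graded $\aut{G}$-module isomorphism. Finally, the identity $I(G)=\mc{I}(G)$ in the central case, noted in Section~\ref{subsec:graphic_matroid} as a consequence of [AP10, Thm.~4.17], rewrites the target as $\bQ[\bfx{n}]/\mc{I}(G)$ and finishes the proof.

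\textbf{Main obstacle.} All of the genuine combinatorial content already sits in Proposition~\ref{prop:GP_break_divisors}; beyond this, the only vigilance required is to track $\aut{G}$-equivariance through each construction, which is immediate from the presentations above. Were Proposition~\ref{prop:GP_break_divisors} not already in hand, the key difficulty would be promoting a Hilbert series match (both quotients are independently known to have dimension equal to the number of spanning trees) into a genuine equality of ideals; this is precisely what the inclusion $\mc{I}(G)\subseteq \mathsf{T}(G)$ coming from explicit annihilators of $x_S$ on $\brkd(G)$ accomplishes.
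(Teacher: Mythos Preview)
Your first two steps match the paper exactly: orbit harmonics for the leftmost isomorphism, Proposition~\ref{prop:GP_break_divisors} for the middle equality, with the $\aut{G}$-equivariance checks you outline being the right ones.

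The rightmost isomorphism is where you diverge from the paper, and there is a genuine gap. The paper obtains $\bQ[\bfx{n}]/\mc{I}(G)\cong\mc{P}(G)$ directly from the fact that $\mc{P}(G)$ is the Macaulay-inverse system of $\mc{I}(G)$ (recorded just before Example~\ref{ex:central_p_space}); the standard harmonic-space isomorphism $J^{\perp}\cong\bQ[\bfx{n}]/J$ is graded and $\aut{G}$-equivariant, so nothing more is needed. You instead pass through the direct sum $\bQ[\bfx{n}]=\mc{P}(G)\oplus I(G)$, which only gives $\mc{P}(G)\cong\bQ[\bfx{n}]/I(G)$ with the \emph{cocircuit} ideal $I(G)$, and then assert $I(G)=\mc{I}(G)$. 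That equality is nowhere stated in the paper, and your citation is a misreading: the appeal to \cite[Theorem~4.17]{AP10} in Section~\ref{subsec:graphic_matroid} is used to show that the \emph{power} ideal $\mc{I}(G)$ may be presented using only bonds rather than all subsets $S$; it says nothing about the cocircuit ideal $I(G)$. The two ideals have very different generators (products of distinct linear forms versus powers of a single linear form), and their equality, even if it happens to hold in the graphical case, would require a separate argument you have not supplied.

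The fix is immediate: drop the claim $I(G)=\mc{I}(G)$ and invoke the Macaulay-inverse description of $\mc{P}(G)$ instead, exactly as the paper does.
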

\begin{proof}
The first isomorphism comes from orbit harmonics. The second equality is exactly Proposition~\ref{prop:GP_break_divisors}. The last isomorphism follows since $\mc{P}(G)$ is the Macaulay-inverse system of $\mc{I}(G)$.
\end{proof}
\begin{remark}
\emph{
 Theorem~\ref{thm:piecing_things} provides an algebraic perspective on the fact that the set of $q$-reduced divisors on $G$ (essentially $G$-parking functions) has the same cardinality as $\brkd(G)$. Indeed, as was mentioned earlier, by work of Postnikov--Shapiro, the quotient $\bQ[\bfx{n}]/\mc{I}(G)$ has a monomial basis indexed by $G$-parking functions.  On the other hand we just showed that this same quotient arises when applying the point orbit method to the set of break divisors on $G$.
}
\end{remark}

\subsection{External and internal zonotopal algebras via orbit harmonics}
\label{subsec:more_orbit_harmonics}
 It is natural to ask whether two other well-known zonotopal algebras \textemdash{} \emph{external} and \emph{internal} \textemdash{} arise in this story.
 We demonstrate how these algebras arise by applying orbit harmonics to graphical zonotopes. \emph{It bears repeating that the central case is central to this article, and we will have no further use for the results of this subsection.}

 We begin by establishing some notation reminiscent of that employed in the central case.
 As usual, fix $G=([n],E)$.
 Consider the  ideals:
 \begin{align}
 \label{eq:external_internal}
 \mc{I}_+(G)&=\langle x_{[n]},  x_S^{d(S)+1} \text{ where $\partial(S)\in\bonds(G)$}\rangle\\
 \mc{I}_-(G)&=\langle x_{[n]},  x_S^{d(S)-1} \text{ where $\partial(S)\in\bonds(G)$}\rangle.
 \end{align}
The external and internal zonotopal algebras are defined as respective quotients of $\bQ[\bfx{n}]$ by these ideals.
The dimensions of these algebras are conveniently described in terms of lattice points in the graphical zonotope $\mc{Z}_G$.
 Let $\mc{Z}_G^{\circ}$ denote the interior of $\mc{Z}_G$.
It is known \cite[Proposition 1.1]{HR11} that
\begin{align}
  \label{eq:external_dimension}
  \dim(\bQ[\bfx{n}]/\mc{I}_+(G))&=|\mc{Z}_G\cap \bZ^n|\\
  \label{eq:internal_dimension}
  \dim(\bQ[\bfx{n}]/\mc{I}_+(G))&=|\mc{Z}_G^{\circ}\cap \bZ^n|.
\end{align}

We will now realize these quotients by  applying orbit harmonics.
Like before, our point sets on which $\aut{G}$ acts are given to us. Let us recast them in the language of divisors to fit with the theme in this article.

Fix an orientation $\mc{O}$ of the edges of $G$.
This determines the \emph{orientable divisor} \[
(\mathrm{indeg}_{\mc{O}}(1)-1,\dots,\mathrm{indeg}_{\mc{O}}(n)-1)\] where $\mathrm{indeg}_{\mc{O}}(i)-1$ is one less than the number of directed edges pointing to $i$.
Note that the degree of this divisor is $|E|-|V|=g(G)-1$.
Denote the set of orientable divisors on $G$ by $\orid(G)$.
By interpreting orientable divisors as lattice points in $\bZ^{n}$, we obtain a polytope in $\bR^{n}$ which is the translation of $\mc{Z}_G$ by the vector $(1,\dots,1)\in \bR^n$. Note that it is  contained in the hyperplane $x_1+\cdots+x_n=g(G)-1$.
We refer to the set of \emph{interior} lattice points in this polytope by $\orid^{\circ}(G)$.
Clearly, both $\orid(G)$ and $\orid^{\circ}(G)$ carry $\aut{G}$-actions.

Now define the ideal $\mathsf{T}_+(G)$ (respectively $\mathsf{T}_-(G)$) as resulting from applying orbit harmonics to $\orid(G)$ (respectively $\orid^{\circ}(G)$), in analogy to $\mathsf{T}(G)$ resulting from $\brkd(G)$.

\begin{proposition}
  We have the equality of ideals
\begin{align*}
  \mathsf{T}_+(G)&=\mc{I}_+(G),\nonumber\\
  \mathsf{T}_-(G)&=\mc{I}_-(G).
\end{align*}
\end{proposition}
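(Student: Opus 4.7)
The plan is to follow the template of Proposition~\ref{prop:GP_break_divisors}: in each case, exhibit explicit polynomials that vanish on $\orid(G)$ (respectively $\orid^\circ(G)$) whose top-degree homogeneous parts already generate $\mc{I}_+(G)$ (respectively $\mc{I}_-(G)$), so that $\mc{I}_+(G) \subseteq \mathsf{T}_+(G)$ (respectively $\mc{I}_-(G) \subseteq \mathsf{T}_-(G)$); then invoke orbit harmonics together with the lattice-point identities \eqref{eq:external_dimension}--\eqref{eq:internal_dimension} to force equality.

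For the external case, fix an orientation $\mc{O}$ of $G$, write $D_\mc{O}$ for the corresponding orientable divisor, and let $S \subset \vertices(G)$ satisfy $\partial(S) \in \bonds(G)$. Splitting indegrees into contributions from $\edges(G[S])$ and $\partial(S)$, one computes
\[
x_S(D_\mc{O}) \;=\; \sum_{i \in S}(\mathrm{indeg}_\mc{O}(i) - 1) \;=\; |\edges(G[S])| - |S| + k_S(\mc{O}),
\]
where $k_S(\mc{O}) \in \{0, 1, \dots, d(S)\}$ is the number of edges of $\partial(S)$ oriented into $S$. Hence $x_S$ takes at most $d(S)+1$ integer values on $\orid(G)$, so
\[
\prod_{k=0}^{d(S)}\bigl(x_S - |\edges(G[S])| + |S| - k\bigr) \;\in\; \mathsf{I}(\orid(G))
\]
has top homogeneous part $x_S^{d(S)+1}$. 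Because every $D_\mc{O}$ has total degree $g(G)-1$, the linear form $x_{[n]}$ also lies in $\mathsf{T}_+(G)$. These two families together cover all generators of $\mc{I}_+(G)$.

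The internal case rests on the classical facet description of $\mc{Z}_G$: its facets come in parallel pairs indexed by bonds $\partial(S)$, with $x_S$ attaining its extrema $|\edges(G[S])|$ and $|\edges(G[S])|+d(S)$ on the two members of the pair (obtained by orienting $\partial(S)$ consistently across the cut); the codimension count uses exactly the bond condition that $G[S]$ and $G[\bar{S}]$ are both connected. Translating by $(1,\dots,1)$, a point $D \in \orid(G)$ lies in the relative interior of the translated zonotope iff $|\edges(G[S])| - |S| < x_S(D) < |\edges(G[S])| - |S| + d(S)$ for every bond. Thus $x_S$ takes at most $d(S)-1$ integer values on $\orid^\circ(G)$, and
\[
\prod_{k=1}^{d(S)-1}\bigl(x_S - |\edges(G[S])| + |S| - k\bigr) \;\in\; \mathsf{I}(\orid^\circ(G))
\]
has top homogeneous part $x_S^{d(S)-1}$. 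Together with $x_{[n]} \in \mathsf{T}_-(G)$, this gives $\mc{I}_-(G) \subseteq \mathsf{T}_-(G)$; when $d(S) = 1$ for some bond (i.e.\ $G$ has a bridge) both sides are the whole ring, consistent with $\orid^\circ(G)$ being empty.

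Finally, orbit harmonics gives $\dim_\bQ \bQ[\bfx{n}]/\mathsf{T}_+(G) = |\orid(G)|$ and $\dim_\bQ \bQ[\bfx{n}]/\mathsf{T}_-(G) = |\orid^\circ(G)|$. Since $\orid(G)$ (respectively $\orid^\circ(G)$) is by construction a translate of $\mc{Z}_G \cap \bZ^n$ (respectively $\mc{Z}_G^\circ \cap \bZ^n$), identities~\eqref{eq:external_dimension}--\eqref{eq:internal_dimension} match these dimensions with those of $\bQ[\bfx{n}]/\mc{I}_\pm(G)$, upgrading the inclusions to equalities. The main obstacle is the facet identification used in the internal case: isolating the minimal inequality system defining $\mc{Z}_G$ and matching it bijectively with bonds (with connectedness of $G[S]$ and $G[\bar{S}]$ providing the requisite codimension drop) is the one genuinely geometric ingredient; modulo this standard input the argument runs in parallel to the central case.
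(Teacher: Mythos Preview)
Your proposal is correct and follows essentially the same route as the paper: in each case you show that $x_S$ takes at most $d(S)+1$ (respectively $d(S)-1$) integer values on the relevant point set, extract the obvious vanishing polynomial with top part $x_S^{d(S)\pm 1}$, and conclude via the dimension count from orbit harmonics and \eqref{eq:external_dimension}--\eqref{eq:internal_dimension}. The paper's argument is the same, phrased in terms of $g(G[S])-1$ rather than $|\edges(G[S])|-|S|$ (these agree since $G[S]$ is connected for a bond), and it handles the internal case more tersely by simply asserting the strict inequalities on interior points rather than invoking the facet description of $\mc{Z}_G$ as you do.
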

\begin{proof}
  We consider the first equality. The proof is similar to that for Proposition~\ref{prop:GP_break_divisors}.
  Since the degree of any orientable divisor is $g(G)-1$, we know that $x_1+\cdots+x_n\in \mathsf{T}_+(G)$. Now fix a proper nonempty $S\subset \vertices(G)$. Consider the linear form $x_S$ as we range over $\orid(G)$. By picking any orientation with the property that all edges in the cut $\partial(S)$ go from $\vertices(G[S])$ to $\vertices(G[\bar{S}])$, we are guaranteed that $x_S$ attains the value $g(G[S])-1$. This is also clearly the minimum. By reversing the orientation on all edges in the cut, we see that $x_S$ attains the value $g(G[S])-1+d(S)$, and that is the maximum such.
  Thus we can see that $x_S$ takes (all) values in the interval $[g(G[S])-1,g(G[S])-1+d(S)]$, which has size $d(S)+1.$
  It follows that $x_S^{d(S)+1}\in \mathsf{T}_+(G)$.
  The equality $\mathsf{T}_+(G)=\mc{I}_+(G)$ now follows from the fact that $|\orid(G)|=|\mc{Z}_G\cap \bZ^n|$ and equation~\eqref{eq:external_dimension}.

  The internal case is essentially the same. One only needs to note that for an orientable divisor corresponding to an interior point in $\orid^{\circ}(G)$, the minimum and maximum values attained by the linear form $x_S$ are $(g(G[S])-1)+1$ and $(g(G[S])-1)+(d(S)-1)$ respectively. This in turn implies that $x_S^{d(S)-1}\in \mathsf{T}_-(G)$.
\end{proof}

We conclude this subsection with one important instance.
Consider $G=K_n^m$ for $m\geq 1$.
Then $\mc{Z}_G$ is the $m$-fold dilation of the standard permutahedron in $\bR^n$ whose vertices are given by permutations of $(n-1,\dots,1,0)$.
The number of lattice points in $\mc{Z}_{G}$ is given by the number of forests on $G$. Orbit harmonics tells us that the permutation action on these lattice points (or equivalently, $\orid(G)$) can be used to construct a graded $S_n$-module given by the quotient \[
\bQ[\bfx{n}]/\langle (x_{i_1}+\cdots x_{i_k})^{mk(n-k)+1} \text{ for } 1\leq i_1<\cdots<i_k\leq n \text{ where } k\in [n] \rangle.
\]
At $m=1$, this quotient was introduced by Shapiro--Shapiro in a geometric context  \cite{ShSh98}  (see also \cite{PSS99}), and the problem of determining its graded $S_n$-module structure is posed \cite[Problem 3]{ShSh98}.

\section{Efimov's construction}
\label{sec:Efimov}

Let $A=(a_{ij})_{i,j\in [k]}$ be a symmetric matrix with nonnegative entries. We assume $a_{ii}\geq 1$ throughout.
The matrix $A$ determines a quiver $Q$ on $k$ vertices labeled $1$ through $k$, with $a_{ij}$ arrows from $i$ to $j$ for $i,j\in [k]$.
We assume that $A$ is such that $Q$ is connected.
Let $\gamma=(\gamma_1,\dots,\gamma_k)\in \bZ_{\geq 0}^k$ be a dimension vector.
We begin by describing the essential construction in \cite{Efi}.

\subsection{Quantum Donaldson--Thomas (DT) invariants}
\label{subsec:quantum_dt}

We consider variables $x_{i,\alpha}$ for $1\leq i\leq k$ and $1\leq \alpha\leq \gamma_i$, and let $\sigma_{\gamma}$ be their sum.
Define the polynomial ring
\begin{align}
A_{\gamma}=\bQ[x_{i,\alpha}\suchthat 1\leq i\leq k,1\leq \alpha\leq \gamma_i].
\end{align}
Furthemore, let
\begin{align}
A_{\gamma}^{\prim}=\bQ[x_{j,\alpha_2}-x_{i,\alpha_1}\suchthat 1\leq i,j\leq k,1\leq \alpha_1\leq \gamma_i, 1\leq \alpha_2\leq \gamma_j].
\end{align}
Then we have $A_{\gamma}=A_{\gamma}^{\prim}\otimes \bQ[\sigma_{\gamma}]$.
Define $S_{\gamma}\coloneqq S_{\gamma_1}\times \cdots \times S_{\gamma_k}$.

Define $J_{\gamma}$ to be the smallest $S_{\gamma}$-stable $A_{\gamma}^{\prim}$-submodule of $A_{\gamma}^{\prim}$ such that the following holds: for any decomposition of $\gamma=\delta+\bar{\delta}$ where both $\delta$ and $\bar{\delta}$ are nonzero we have that
\begin{align}
  \label{eq:important_poly}
f_{\delta,\bar{\delta}}=\displaystyle\prod_{i\neq j\in[k]}\prod_{\alpha_1=1}^{\delta_i}\prod_{\alpha_2=\delta_j+1}^{\gamma_j}(x_{j,\alpha_2}-x_{i,\alpha_1})^{a_{ij}}\displaystyle\prod_{i\in[k]}\prod_{\alpha_1=1}^{\delta_i}\prod_{\alpha_2=\delta_i+1}^{\gamma_i}(x_{i,\alpha_2}-x_{i,\alpha_1})^{a_{ii}-1} \in J_{\gamma}.
\end{align}
 We shall reinterpret $f_{\delta,\bar{\delta}}$ in terms of cuts (as the indexing hints) in a graph constructed from $(A,\gamma)$.

Letting $\mc{H}_{\gamma}^{\prim}\coloneqq (A_{\gamma}^{\prim})^{S_{\gamma}}$, consider the  decomposition \cite[p. 1139]{Efi}
\begin{align}
\label{eq:Efimov_decomposition}
\mc{H}_{\gamma}^\prim=V_{\gamma}^{\prim}\oplus J_{\gamma}^{S_{\gamma}}.
\end{align}
The quantum DT invariants of the quiver $Q$ with dimension vector $\gamma$ (assuming trivial potential and stability) arise as dimensions of the graded pieces of the graded vector space $V_{\gamma}^{\prim}$ as explained in \cite[Section 4]{Efi}. We describe the $\bZ$-grading employed.

The \emph{Euler form} $\chi_Q(\gamma,\delta)$ given dimension vectors $\gamma$ and $\delta$ is defined as
\begin{align}
\chi_Q(\gamma,\delta)\coloneqq \sum_{1\leq i\leq k}\gamma_i\delta_i-\sum_{1\leq i,j\leq k}a_{ij}\gamma_i\delta_j.
\end{align}
Polynomials $f\in \mc{H}_{\gamma}$ of degree $k$ get assigned the grading $2k+\chi_Q(\gamma,\gamma)$.
We let $V_{\gamma,k}^{\prim}$ be the space of elements in $V_{\gamma}^{\prim}$ with this grading.
Following \cite[Section 4]{Efi}, set
\begin{align}
c_{\gamma,k}\coloneqq \dim(V_{\gamma,k}^{\mathrm{prim}}),
\end{align}
and consider the polynomial in $\bZ_{\geq 0}[q^{\pm \frac{1}{2}}]$:
\begin{align}
\Omega_{\gamma}(q)=\sum_{k\in \bZ}c_{\gamma,k}q^{k/2}.
\end{align}
These $\Omega_{\gamma}(q)$ are the \emph{quantum DT-invariants} of the quiver $Q$.

We get a necessary condition for when $V_{\gamma,k}^{\prim}$ is nonzero in \cite[Theorem 1.2]{Efi} (also presented as \cite[Theorem 3.10]{Efi}). To state it we need the quantity $N_{\gamma}(Q)$ \cite[Section 1]{Efi} defined\footnote{The original definition looks slightly different as we have simplified the expression using our assumption $a_{ii}\geq 1$.} as
\begin{align}
\label{eq:n_gamma}
N_{\gamma}(Q)\coloneqq \frac{1}{2}\left( \sum_{1\leq i\neq \leq k}a_{ij}\gamma_i\gamma_j+\sum_{1\leq i\leq k}(a_{ii}-1)\gamma_i(\gamma_i-1)\right)-\sum_{1\leq i\leq k}\gamma_i +2.
\end{align}
This given, the following holds.
\begin{theorem}[{\cite[Theorem 3.10]{Efi}}]
\label{thm:k_for_nonzero_spaces}
If $V_{\gamma,k}^{\mathrm{prim}}\neq 0$, then $\gamma\neq 0$, and
\[
k\equiv \chi_Q(\gamma,\gamma) \mod 2\quad \text{and} \quad \chi_Q(\gamma,\gamma)\leq k\leq \chi_Q(\gamma,\gamma)+2N_{\gamma}(Q).
\]
\end{theorem}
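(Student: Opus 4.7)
The first three claims---$\gamma \neq 0$, the parity $k \equiv \chi_Q(\gamma,\gamma) \pmod{2}$, and the lower bound $\chi_Q(\gamma,\gamma) \leq k$---are essentially tautologies of the grading convention. Any nonzero element of $V_\gamma^{\prim} \subseteq A_\gamma^{\prim}$ is a polynomial of some ordinary degree $d \geq 0$, carrying by convention the grade $k = 2d + \chi_Q(\gamma,\gamma)$. Consequently $d \geq 0$ yields the lower bound, $k - \chi_Q(\gamma,\gamma) = 2d$ yields the parity, and the case $\gamma = 0$ (where $A_\gamma^{\prim} = \bQ$ sits only in degree zero) is handled separately by the convention $V_0^{\prim} = 0$.

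The substantive content is the upper bound $d \leq N_\gamma(Q)$. My plan is to realize $V_\gamma^{\prim}$ as the $S_\gamma$-invariants of a central zonotopal algebra and then invoke the matroid-theoretic Hilbert-series formula from Section~\ref{sec:central_zonotopal}. First, I would associate to $(Q,\gamma)$ a connected graph $G_{Q,\gamma}$ on the vertex set $\bigsqcup_i \{(i,1),\dots,(i,\gamma_i)\}$ with $a_{ij}$ edges between $(i,\alpha_1)$ and $(j,\alpha_2)$ for $i \neq j$ and ``$(a_{ii}-1)$-many'' parallel edges between $(i,\alpha_1)$ and $(i,\alpha_2)$ for $\alpha_1 \neq \alpha_2$, matching the exponents in equation~\eqref{eq:important_poly}. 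Under the dictionary $x_{i,\alpha} \leftrightarrow (i,\alpha)$, the ring $A_\gamma^{\prim}$ is identified with the polynomial ring on the hyperplane $\{\sum x_{i,\alpha} = 0\}$, the natural ambient space for $\mc{P}(\hat X_{G_{Q,\gamma}})$. Second, I would recognize each generator $f_{\delta,\bar\delta}$ as a cut polynomial: given $\gamma = \delta + \bar\delta$, set $S := \{(i,\alpha) : \alpha \leq \delta_i\}$ and observe that the multiset of edges in $\partial(S) \in \cuts(G_{Q,\gamma})$ matches the product defining $f_{\delta,\bar\delta}$. This gives $J_\gamma \subseteq \hat I(G_{Q,\gamma})$; the reverse inclusion follows because every bond of $G_{Q,\gamma}$ can be translated by an element of $S_\gamma$ to the form $\partial(S)$, while every non-bond cocircuit is an $A_\gamma^{\prim}$-multiple of a bond (cf.~\eqref{eq:def_cocircuit_ideal}). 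The decomposition~\eqref{eq:central_p_decomposition} then descends to $S_\gamma$-invariants, yielding $V_\gamma^{\prim} \cong \mc{P}(G_{Q,\gamma})^{S_\gamma}$ as graded spaces, and~\eqref{eq:hilb_p(x)} caps the top degree of $\mc{P}(G_{Q,\gamma})$ at $|\edges(G_{Q,\gamma})| - |\vertices(G_{Q,\gamma})| + 1$. A direct edge-vertex count identifies this quantity with $N_\gamma(Q)$ as defined in~\eqref{eq:n_gamma}.

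The main obstacle will lie in the second step above: verifying on the nose that the $S_\gamma$-stable $A_\gamma^{\prim}$-module $J_\gamma$ coincides with the cocircuit ideal $\hat I(G_{Q,\gamma})$. The prescribed generators $f_{\delta,\bar\delta}$ only naturally produce cuts $\partial(S)$ of a special shape (those with $S$ an $S_\gamma$-standard down-set), so recovering every bond requires both the required $S_\gamma$-closure of $J_\gamma$ and the observation that arbitrary cocircuits lie in the ideal generated by bonds. A secondary but nontrivial accounting point is matching the graph-theoretic quantity $|\edges(G_{Q,\gamma})| - |\vertices(G_{Q,\gamma})| + 1$ with the explicit formula~\eqref{eq:n_gamma} for $N_\gamma(Q)$; this depends sensitively on exactly which loops of $Q$ contribute cliques to $G_{Q,\gamma}$ and on whether the all-ones vector is included in the ambient vector configuration, in the spirit of Section~\ref{subsec:variations}.
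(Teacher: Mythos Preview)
Your proposal is correct and follows essentially the same route as the paper: the paper's Lemma~\ref{lem:efimov_translated} is your identification of $f_{\delta,\bar\delta}$ with cut polynomials and of $J_\gamma$ with $\hat I(G_{Q,\gamma})$, Corollary~\ref{cor:J_gamma_revisited} is your decomposition $A_\gamma^{\prim} = \mc{P}(G_{Q,\gamma}) \oplus J_\gamma$, and the Hilbert-series cap via~\eqref{eq:hilb_p(x)} together with an edge--vertex count is exactly how the paper recovers Efimov's bound.

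One small accounting slip: the quantity $|\edges(G_{Q,\gamma})| - |\vertices(G_{Q,\gamma})| + 1 = g(G_{Q,\gamma})$ equals $N_\gamma(Q) - 1$, not $N_\gamma(Q)$ (this is the paper's Lemma~\ref{lem:N_gamma_reinterpreted}). This does not harm your argument---you actually obtain the sharper bound $d \leq N_\gamma(Q) - 1$, which of course implies the stated $d \leq N_\gamma(Q)$---and the paper makes the same observation when it rewrites $\Omega_\gamma(q)$ with the summation index running only up to $N_\gamma(Q) - 1$.
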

In view of Theorem~\ref{thm:k_for_nonzero_spaces} we may rewrite $\Omega_{\gamma}(q)$ as
\begin{align}
\Omega_{\gamma}(q)=q^{\frac{1}{2}\chi_Q(\gamma,\gamma)}\sum_{0\leq k\leq N_{\gamma}(Q)-1}c_{\gamma,2k+\chi_Q(\gamma,\gamma)}q^{k}.
\end{align}
Let us denote the sum on the right by $\tilde{\Omega}_{\gamma}(q)$. This lies in $\bZ_{\geq 0}[q]$ and its degree is bounded above by $N_{\gamma}(Q)-1$. We will abuse notation and refer to $\tilde{\Omega}_{\gamma}(q)$ as the quantum DT invariant as well.\footnote{As the careful reader may have realized, we already do so in the introduction.}

The reader might find the preceding  construction both ingenious and mysterious.
It transpires that the space $V_{\gamma,k}^{\prim}$ is the space of $S_\gamma$-invariants for a central $P$-space determined rather naturally from the quiver and the dimension vector.
We proceed to describe this construction.

\subsection{The covering graph construction}
\label{subsec:symmetric_matrix_to_graph}
Our point of departure from Efimov is to consider an analogue of  \eqref{eq:Efimov_decomposition} where we do not  take $S_{\gamma}$-invariants.
We will construct an $S_\gamma$-stable  space $W_{\gamma}^{\prim}$ so that the following holds:
\begin{align}
\label{eq:general_efimov}
A_{\gamma}^{\prim}=W_{\gamma}^{\prim}\oplus J_{\gamma}.
\end{align}
Our next construction is crucial to this end.


Given the symmetric quiver $Q$ as before, we construct an undirected graph $G_{Q,\gamma}$ as follows.
Consider a set of vertices $v_{i,\alpha}$ for $1\leq i\leq k$ and $1\leq \alpha\leq \gamma_i$. For $i\in [k]$, the restriction of $G_{Q,\gamma}$ to the vertices $v_{i,1},\dots, v_{i,\gamma_i}$ is the clique on $\gamma_i$ vertices with $a_{ii}-1$ edges between any two distinct vertices. In particular, if $a_{ii}=1$, then we have a collection of $\gamma_i$ totally disconnected vertices.
For $i\neq j\in [k]$ we draw $a_{ij}$ edges between any vertex $v_{i,\alpha_1}$ and $v_{j,\alpha_2}$ for $1\leq \alpha_1\leq \gamma_i$ and $1\leq \alpha_2\leq \gamma_j$.
This determines $G_{Q,\gamma}$. See Figure~\ref{fig:Gqgamma} for  an example of this construction.

\begin{figure}
\includegraphics[scale=0.75]{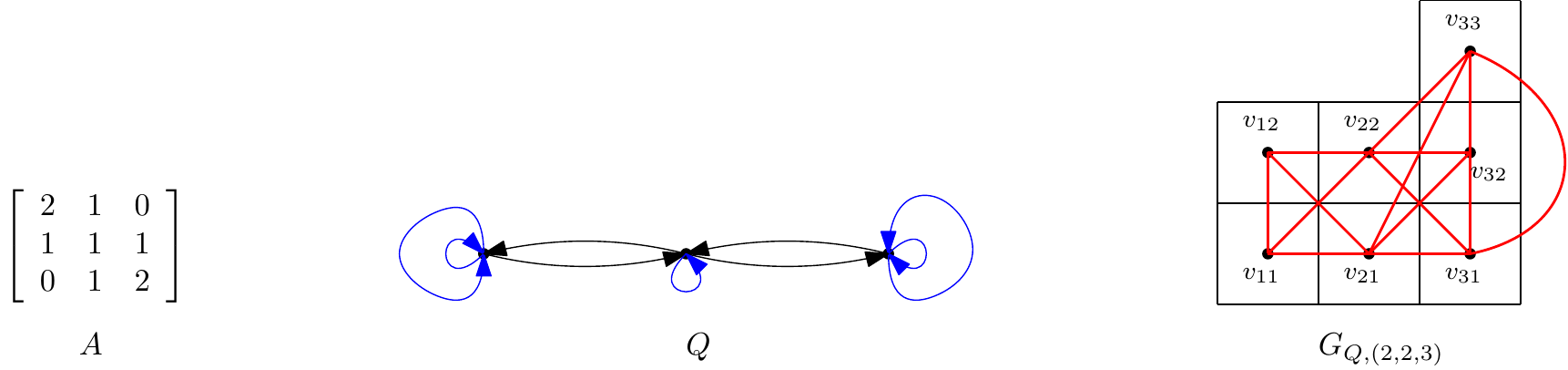}
\caption{A symmetric matrix, its associated quiver, and an instance of $G_{Q,\gamma}$}
\label{fig:Gqgamma}
\end{figure}

\emph{We will assume throughout that $G_{Q,\gamma}$ is connected. Furthermore, for the remainder of this section, we fix our symmetric quiver $Q$ and dimension vector $\gamma$, and we will drop them from notation. In particular, unless otherwise noted, we let $G\coloneqq G_{Q,\gamma}$.
}


We reinterpret the important element $f_{\delta,\bar{\delta}}\in J_{\gamma}$ as a polynomial $p_Y$ for a cut in $G$.
\begin{lemma}
\label{lem:efimov_translated}
Consider a decomposition $\gamma=\delta+\bar{\delta}$ where $\delta,\bar{\delta}\neq \gamma$.
The following hold.
\begin{enumerate}
\item $f_{\delta,\bar{\delta}}=p_Y$ for $Y\in \cuts(G)$.
\item For $S\subseteq \vertices(G)$ such that $\partial(S)\in \bonds(G)$, we have that $p_{\partial(S)}\in J_{\gamma}$.
More specifically, there exists a decomposition $\gamma=\delta+\bar{\delta}$ such that $\sigma(f_{\delta,\bar{\delta}})=p_{\partial(S)}$ for some $\sigma\in S_{\gamma}$.
\end{enumerate}
\end{lemma}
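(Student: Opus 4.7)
My strategy is to set up an explicit dictionary between ordered decompositions of $\gamma$ and vertex subsets of $G \coloneqq G_{Q,\gamma}$, then leverage $S_\gamma$-stability to handle arbitrary bonds.

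For part (1), given $\gamma = \delta + \bar\delta$, I associate the vertex subset
\[
S \coloneqq \{v_{i,\alpha} : 1 \le i \le k,\ 1 \le \alpha \le \delta_i\} \subseteq \vertices(G),
\]
so that $\bar S = \{v_{i,\alpha} : \delta_i < \alpha \le \gamma_i\}$. The plan is to enumerate the edges of $\partial(S)$ using the defining multiplicities of $G_{Q,\gamma}$: between any $v_{i,\alpha_1}\in S$ and $v_{j,\alpha_2}\in \bar S$ there are $a_{ij}$ edges if $i\neq j$, and $a_{ii}-1$ edges if $i=j$. These counts are precisely the exponents in the two products defining $f_{\delta,\bar\delta}$, so with a suitable orientation convention the factorization of $p_{\partial(S)}$ matches that of $f_{\delta,\bar\delta}$ factor by factor. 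Any discrepancy is a global sign arising from choosing $(x_i - x_j)$ with $i<j$ rather than Efimov's $(x_{j,\alpha_2} - x_{i,\alpha_1})$ for each edge; this is harmless, as flipping signs of individual $p_e$'s does not alter the ideal $\mc{I}(G)$ nor membership in $J_\gamma$.

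For part (2), I first observe that $S_\gamma$ embeds in $\aut{G}$: permuting the indices $\alpha$ within each color class $\{v_{i,1},\ldots,v_{i,\gamma_i}\}$ preserves every edge multiplicity (both within and across color classes), so every element of $S_\gamma$ acts as a graph automorphism. Given $S$ with $\partial(S)\in\bonds(G)$, I set $\delta_i \coloneqq |S \cap \{v_{i,1},\ldots,v_{i,\gamma_i}\}|$ and $\bar\delta \coloneqq \gamma - \delta$; both are nonzero since $S$ is nonempty and proper. Picking any $\sigma \in S_\gamma$ that sends $\{v_{i,\alpha} : 1 \le \alpha \le \delta_i\}$ to $S$, the automorphism property gives $\sigma(\partial(\{v_{i,\alpha} : \alpha \le \delta_i\})) = \partial(S)$, and applying $\sigma$ to the identity from part (1) yields $\sigma(f_{\delta,\bar\delta}) = p_{\partial(S)}$ (up to sign, as above). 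Since $J_\gamma$ is $S_\gamma$-stable by definition and contains $f_{\delta,\bar\delta}$, it contains $p_{\partial(S)}$.

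The main obstacle is really bookkeeping: confirming that the edge multiplicities of $G_{Q,\gamma}$ match the exponents $a_{ij}$ and $a_{ii}-1$ in Efimov's formula on the nose, and tracking signs between the two orientation conventions. Conceptually the lemma simply verifies that the covering graph has been designed precisely so that Efimov's construction translates to cut polynomials; beyond this combinatorial match, no nontrivial graph-theoretic input is needed.
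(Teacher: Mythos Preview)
Your proposal is correct and follows essentially the same approach as the paper: both define $S=\{v_{i,\alpha}:1\le\alpha\le\delta_i\}$, enumerate the edges of $\partial(S)$ via the multiplicities $a_{ij}$ (for $i\neq j$) and $a_{ii}-1$ (for $i=j$) to identify $f_{\delta,\bar\delta}$ with $p_{\partial(S)}$, and then for an arbitrary bond use an element of $S_\gamma\subseteq\aut{G}$ to carry a ``standard'' $S$ to the given one. You are slightly more careful than the paper in flagging the global sign coming from the orientation convention $p_v=x_i-x_j$ versus Efimov's $(x_{j,\alpha_2}-x_{i,\alpha_1})$; the paper simply asserts equality, while you (correctly) note the sign is immaterial for membership in $J_\gamma$.
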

\begin{proof}
  Recall that
  \begin{align}
    \label{eq:f_delta_again}
    f_{\delta,\bar{\delta}}=\textcolor{blue}{\displaystyle\prod_{i\neq j\in[k]}\prod_{\alpha_1=1}^{\delta_i}\prod_{\alpha_2=\delta_j+1}^{\gamma_j}(x_{j,\alpha_2}-x_{i,\alpha_1})^{a_{ij}}} \textcolor{magenta}{\displaystyle\prod_{i\in[k]}\prod_{\alpha_1=1}^{\delta_i}\prod_{\alpha_2=\delta_i+1}^{\gamma_i}(x_{i,\alpha_2}-x_{i,\alpha_1})^{a_{ii}-1}}.
  \end{align}
  We understand each triple product separately.

Decompose $\vertices(G)$ as $\vertices_1\sqcup \cdots \sqcup \vertices_k$ where
\[
\vertices_i\coloneqq \{v_{i,\alpha}\suchthat 1\leq \alpha\leq \gamma_i\}.
\]
The decomposition $\gamma=\delta+\bar{\delta}$ induces a decomposition of each $\vertices_i=S_i\sqcup \bar{S}_i$ where
\begin{align*}
S_i=\{v_{i,\alpha}\suchthat 1\leq\alpha\leq \delta_i\}, \hspace{10mm}\bar{S}_i=\{v_{i,\alpha}\suchthat \delta_i+1\leq\alpha\leq \gamma_i\}.
\end{align*}
Set $S\coloneqq \sqcup_{1\leq i\leq k}S_i$ and consider the polynomial $p_{\partial(S)}$ attached to the cut $\partial(S)$.
Given the construction of $G$, edges in $\partial(S)$ come in the following two flavors.
\begin{itemize}
\item For $i\neq j\in [k]$, every vertex in $S_i$ is connected to every vertex in $\bar{S}_j$ via $a_{ij}$ edges.
\item For $i\in [k]$, every vertex in $S_i$ is connected to every vertex in $\bar{S}_i$ via $a_{ii}-1$ edges.
\end{itemize}
It follows that $f_{\delta,\bar{\delta}}=p_{\partial(S)}$, which implies the first part of the claim.
Figure~\ref{fig:f_delta_example} decomposes the graph in Figure~\ref{fig:Gqgamma} with $\gamma=(2,2,3)=(1,0,2)+(1,2,1)$. The shaded cells contain the vertices in $S$. Observe that all edges have one end point in the gray shaded region and the other end point in the unshaded region. Edges colored blue (respectively magenta) contribute to the blue (respectively magenta) term in the expression for $f_{\delta,\bar{\delta}}$ in~\eqref{eq:f_delta_again}.
\begin{figure}[!h]
\includegraphics[scale=0.75]{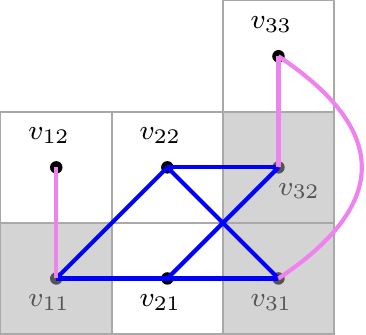}
\caption{$f_{\delta,\bar{\delta}}$ corresponding to a cut}
\label{fig:f_delta_example}
\end{figure}

We  proceed to prove the second statement. Consider a nonempty proper subset $S\subset \vertices(G)$ such that $\partial(S)\in \bonds(G)$.
This induces a decomposition
\begin{align}
S=S_1\sqcup\cdots \sqcup S_k
\end{align}
by letting $S_i\coloneqq S\cap \vertices_i$ for $i\in [k]$.
Define $\delta$ to equal $(|S_1|,\dots,|S_k|)$. This then allows us to decompose $\gamma=\delta+\bar{\delta}$.
Since $S$ is a nonempty proper subset of $\vertices(G)$, we are guaranteed that neither $\delta,\bar{\delta}\neq\gamma$.
We can find a permutation $\sigma\in S_{\gamma}$ that sends $S_i$ to the `initial' set of vertices $\{v_{i,\alpha}\suchthat 1\leq \alpha\leq \delta_i\}$. Since $S_{\gamma}$ is a subgroup of $\aut{G}$, hitting $\vertices(G)$ with $\sigma$  permutes bonds.
Thus we have that $p_{\partial(S)}$, up to reindexing variables, is $f_{\delta,\bar{\delta}}$. Since $J_{\gamma}$ is closed under the $S_{\gamma}$-action, the proof is complete.
\end{proof}
Having inferred that $J_{\gamma}$ is the ideal generated by $p_Y$ for $Y\in \bonds(G)$, by appealing to our discussion at the end of Subsection~\ref{subsec:variations}, we obtain the following corollary that `combinatorializes' Efimov's construction .

\begin{corollary}
\label{cor:J_gamma_revisited}
The following decomposition holds:
\[
A_{\gamma}^{\prim}=\mc{P}(G)\oplus J_{\gamma}.
\]
\end{corollary}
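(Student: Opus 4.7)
The strategy is short because Lemma~\ref{lem:efimov_translated} has done the heavy lifting. The plan is to identify $A_{\gamma}^{\prim}$ with the ring of regular functions on $\mathrm{span}(\hat{X}_G)$, show $J_{\gamma} = \hat{I}(G)$, and then invoke the decomposition \eqref{eq:central_p_decomposition}.

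First I would set up the identification. Relabeling the pair $(i,\alpha)$ as the vertex $v_{i,\alpha}$ of $G = G_{Q,\gamma}$, the ring $A_{\gamma}^{\prim}$ becomes exactly the polynomial ring $\bQ[x_v - x_w \suchthat v, w \in \vertices(G)]$, which by the discussion in Subsection~\ref{subsec:variations} is the ring of regular functions on the hyperplane $\mathrm{span}(\hat{X}_G) = \{x \suchthat \sum_v x_v = 0\} \subset \bR^{\vertices(G)}$. Under this identification, the polynomial $p_e$ attached to an edge vector $e_v - e_w$ of $\hat{X}_G$ coincides with the linear factors appearing in $f_{\delta,\bar{\delta}}$.

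Second, I would establish the equality $J_{\gamma} = \hat{I}(G)$ as ideals of $A_{\gamma}^{\prim}$. The inclusion $\hat{I}(G) \subseteq J_{\gamma}$ is precisely part (2) of Lemma~\ref{lem:efimov_translated}: each generator $p_{\partial(S)}$ for $\partial(S) \in \bonds(G)$ lies in $J_{\gamma}$. For the reverse inclusion, $J_{\gamma}$ is by construction the smallest $S_{\gamma}$-stable $A_{\gamma}^{\prim}$-submodule (hence ideal) containing the family $\{f_{\delta,\bar{\delta}}\}$. By part (1) of the lemma, each $f_{\delta,\bar{\delta}}$ equals $p_Y$ for some cut $Y \in \cuts(G)$. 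Any cut contains a bond $Y' \subseteq Y$, and then $p_{Y'}$ divides $p_Y = \prod_{y \in Y} p_y$, so $p_Y \in \hat{I}(G)$. Since $S_{\gamma} \subseteq \aut{G}$ and $\aut{G}$ permutes bonds, the ideal $\hat{I}(G)$ is $S_{\gamma}$-stable; hence $\hat{I}(G)$ is one candidate for the smallest $S_{\gamma}$-stable ideal containing the $f_{\delta,\bar{\delta}}$, forcing $J_{\gamma} \subseteq \hat{I}(G)$.

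Finally, I would apply \eqref{eq:central_p_decomposition}, which asserts
\[
\bQ[x_i - x_j \suchthat i < j] = \mc{P}(G) \oplus \hat{I}(G).
\]
Translating through the identification of the first step and substituting $J_{\gamma} = \hat{I}(G)$ from the second step gives $A_{\gamma}^{\prim} = \mc{P}(G) \oplus J_{\gamma}$, which is the desired corollary. The only substantive point is the bookkeeping argument that $J_{\gamma} \subseteq \hat{I}(G)$, which relies on the elementary observation that any cut contains a bond together with the $\aut{G}$-stability of $\hat{I}(G)$; no new combinatorial or algebraic input beyond Lemma~\ref{lem:efimov_translated} and the central-zonotopal decomposition is needed.
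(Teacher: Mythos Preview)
Your proposal is correct and follows essentially the same approach as the paper: identify $A_{\gamma}^{\prim}$ with the ring of regular functions on $\mathrm{span}(\hat{X}_G)$, use Lemma~\ref{lem:efimov_translated} together with the fact that every cut contains a bond to conclude $J_{\gamma}=\hat{I}(G)$, and then invoke~\eqref{eq:central_p_decomposition}. The paper's proof is more terse, but your version spells out both inclusions and the $S_{\gamma}$-stability of $\hat{I}(G)$ explicitly, which the paper leaves implicit.
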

\begin{proof}
Recall that
$A_{\gamma}^{\prim}=\bQ[x_{j,\alpha_2}-x_{i,\alpha_1}\suchthat 1\leq i,j\leq k,1\leq \alpha_1\leq \gamma_i, 1\leq \alpha_2\leq \gamma_j].$
Since $p_Y$ for $Y\in \cuts(G)$ is in the ideal  generated by $p_{Y'}$ for $Y'\in \bonds(G)$, we get from Lemma~\ref{lem:efimov_translated}  that
\begin{align}
J_{\gamma}= \hat{I}(G).
\end{align}
The claim follows by comparison with Equation~\eqref{eq:central_p_decomposition}.
\end{proof}

Now we can realize $\tilde{\Omega}_{\gamma}(q)$ in terms of the space $\mc{P}(G)$ with this new perspective.
Given that usual degree $k$ polynomials end up in the (unusual) grading $2k+\chi_Q(\gamma,\gamma)$ in Efimov's context, by appealing to Corollary~\ref{cor:J_gamma_revisited}, we immediately get our second main result showing that the dimensions of the graded pieces of $\mc{P}(G)^{S_{\gamma}}$ encode coefficients of $\tilde{\Omega}_{\gamma}(q)$.
\begin{theorem}
\label{thm:graded_multiplicity_quantum_DT}
We have the equality
\begin{align*}
\hilb(\mc{P}(G)^{S_{\gamma}})=\tilde{\Omega}_{\gamma}(q).
\end{align*}
\end{theorem}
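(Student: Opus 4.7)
The plan is to invoke Corollary~\ref{cor:J_gamma_revisited} and take $S_{\gamma}$-invariants, then match the resulting decomposition against Efimov's decomposition~\eqref{eq:Efimov_decomposition}, and finally reconcile the two gradings. All the hard work has already been done in Corollary~\ref{cor:J_gamma_revisited} (the ``combinatorialization'' of $J_{\gamma}$ as $\hat{I}(G)$), so what remains is essentially a bookkeeping exercise.

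First I would check that both summands in Corollary~\ref{cor:J_gamma_revisited} are $S_{\gamma}$-stable. The ideal $J_{\gamma}$ is $S_{\gamma}$-stable by its very definition. For $\mc{P}(G)=\bQ\{p_Y : Y \text{ slim}\}$, note that $S_{\gamma}$ is a subgroup of $\aut{G}$ (this is precisely the design feature of the covering graph $G_{Q,\gamma}$), and any graph automorphism sends slim $Y$ to slim $\sigma(Y)$ while carrying $p_Y$ to $\pm p_{\sigma(Y)}$, so $\mc{P}(G)$ is $\aut{G}$-stable, hence $S_{\gamma}$-stable. Since taking invariants of a finite group acting on a $\bQ$-vector space is exact and respects direct sums (via the Reynolds operator), applying $(-)^{S_{\gamma}}$ to Corollary~\ref{cor:J_gamma_revisited} yields, as graded vector spaces in the standard polynomial grading,
\[
\mc{H}_{\gamma}^{\prim} \;=\; (A_{\gamma}^{\prim})^{S_{\gamma}} \;=\; \mc{P}(G)^{S_{\gamma}} \oplus J_{\gamma}^{S_{\gamma}}.
\]

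Next I would compare this with Efimov's decomposition $\mc{H}_{\gamma}^{\prim} = V_{\gamma}^{\prim} \oplus J_{\gamma}^{S_{\gamma}}$ from~\eqref{eq:Efimov_decomposition}. Since both $\mc{P}(G)^{S_{\gamma}}$ and $V_{\gamma}^{\prim}$ are graded complements of the same homogeneous subspace $J_{\gamma}^{S_{\gamma}}$ inside the graded space $\mc{H}_{\gamma}^{\prim}$, their graded dimensions (in the standard grading) must agree degree-by-degree, because each is isomorphic as a graded vector space to the quotient $\mc{H}_{\gamma}^{\prim}/J_{\gamma}^{S_{\gamma}}$.

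Finally I would translate between the standard grading and Efimov's grading. By definition, a standard-degree-$d$ homogeneous element of $V_{\gamma}^{\prim}$ sits in Efimov-degree $k=2d+\chi_Q(\gamma,\gamma)$, and $c_{\gamma,k}=\dim V_{\gamma,k}^{\prim}$ vanishes unless $k$ has this form. Writing $v_d\coloneqq \dim(\mc{P}(G)^{S_{\gamma}})_d$, the previous paragraph gives $v_d = c_{\gamma,2d+\chi_Q(\gamma,\gamma)}$, and hence
\[
\tilde{\Omega}_{\gamma}(q) \;=\; \sum_{d\geq 0} c_{\gamma,\,2d+\chi_Q(\gamma,\gamma)}\, q^{d} \;=\; \sum_{d\geq 0} v_d\, q^{d} \;=\; \hilb(\mc{P}(G)^{S_{\gamma}}).
\]

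The only step that might require a moment of thought is checking that the direct sum in Corollary~\ref{cor:J_gamma_revisited} really is a decomposition of \emph{graded} vector spaces (so that passing to $S_{\gamma}$-invariants preserves degrees), and that the shift $k=2d+\chi_Q(\gamma,\gamma)$ is applied consistently; both of these are essentially definitional, so I do not anticipate a substantive obstacle.
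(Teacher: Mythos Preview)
Your proposal is correct and follows essentially the same approach as the paper: the paper's argument is a one-sentence appeal to Corollary~\ref{cor:J_gamma_revisited} together with the grading translation $k=2d+\chi_Q(\gamma,\gamma)$, and you have simply made the intermediate steps (the $S_{\gamma}$-stability of the summands, exactness of invariants, and the fact that any two graded complements of $J_{\gamma}^{S_{\gamma}}$ have the same Hilbert series) explicit.
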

A couple of remarks are in order.
Note that we have bypassed the intricate argument in the proof of \cite[Theorem 3.10]{Efi} wherein Efimov establishes that only finitely many $V_{\gamma,k}^{\prim}$ are nonzero. This is also where the quantity $N_{\gamma}(Q)$ enters the picture in \emph{loc. cit.}. In fact, by~\eqref{eq:hilb_p(x)} we know that $\hilb(\mc{P}(G))=q^{|\edges(G)|-|\vertices(G)|+1}T_G(1,q^{-1})=q^{g(G)}T_G(1,q^{-1})$.
Thus, we infer the pleasant fact that the degree of $\hilb(\mc{P}(G)^{S_{\gamma}})$, and thereby $\tilde{\Omega}_{\gamma}(q)$, is bounded above by the genus of $G$.
In summary, we have  in fact succinctly established \cite[Theorem 3.10]{Efi} once we show that  the quantity $N_{\gamma}(Q)$ is essentially the genus.
The straightforward proof of the next lemma  is omitted.
\begin{lemma}
\label{lem:N_gamma_reinterpreted}
We have the equality: $g(G)=N_{\gamma}(Q)-1$.
\end{lemma}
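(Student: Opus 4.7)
The plan is to verify the identity by directly counting vertices and edges of the covering graph $G = G_{Q,\gamma}$ and matching the resulting expression for $g(G) = |\edges(G)| - |\vertices(G)| + 1$ against the defining formula~\eqref{eq:n_gamma} for $N_\gamma(Q)$.

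First I would observe that the vertex count is immediate from the construction in Section~\ref{subsec:symmetric_matrix_to_graph}: the vertices are indexed by pairs $(i,\alpha)$ with $1 \le i \le k$ and $1 \le \alpha \le \gamma_i$, so $|\vertices(G)| = \sum_i \gamma_i$. Next I would decompose $\edges(G)$ according to which blocks $\vertices_i$ and $\vertices_j$ the endpoints lie in. The induced subgraph on $\vertices_i$ is $K_{\gamma_i}$ with every edge taken $a_{ii}-1$ times, contributing $\binom{\gamma_i}{2}(a_{ii}-1) = \tfrac12 \gamma_i(\gamma_i-1)(a_{ii}-1)$ edges. For an unordered pair $\{i,j\}$ with $i \neq j$, there are $a_{ij}\,\gamma_i\gamma_j$ edges (using $a_{ij} = a_{ji}$), which I would express symmetrically as $\tfrac12 \sum_{i \neq j} a_{ij}\gamma_i\gamma_j$ after summing over ordered pairs. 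Hence
\[
|\edges(G)| = \frac12\sum_{i \neq j} a_{ij}\gamma_i\gamma_j + \frac12 \sum_i (a_{ii}-1)\gamma_i(\gamma_i-1).
\]

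Subtracting $|\vertices(G)| = \sum_i \gamma_i$ and adding $1$ gives
\[
g(G) = \frac12\sum_{i \neq j} a_{ij}\gamma_i\gamma_j + \frac12 \sum_i (a_{ii}-1)\gamma_i(\gamma_i-1) - \sum_i \gamma_i + 1,
\]
which, on comparison with \eqref{eq:n_gamma}, is exactly $N_\gamma(Q) - 1$.

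There is no real obstacle here; the only point requiring a moment of care is the bookkeeping of the factor $\tfrac12$ arising from the conversion between ordered and unordered pairs $\{i,j\}$ with $i \neq j$, which aligns cleanly with the normalization chosen in the definition of $N_\gamma(Q)$. The assumption $a_{ii} \ge 1$ ensures that the intra-block contribution $\tfrac12(a_{ii}-1)\gamma_i(\gamma_i-1)$ is nonnegative and matches the second sum in~\eqref{eq:n_gamma} verbatim, consistent with the simplification noted in the footnote accompanying that display.
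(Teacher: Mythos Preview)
Your proof is correct and is precisely the straightforward direct count that the paper has in mind; the paper itself omits the proof, remarking only that it is straightforward. There is nothing to add.
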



\subsection{Numerical DT invariants}
\label{subsec:numerical_DT}

Define the \emph{numerical} DT-invariant \begin{align}
\mathrm{DT}_{Q,\gamma}\coloneqq \tilde{\Omega}_{\gamma}(1).
\end{align}
We will drop $Q$ from the subscript and write $\mathrm{DT}_{\gamma}$ when it is clear from context.

We are ready to give a manifestly nonnegative combinatorial interpretation to these numbers.
To the best of the authors knowledge, while several explicit formulae \textemdash{} invariably signed because of the presence of the number-theoretic M\"{o}bius function (see next section for such expressions, and also \cite{PSS18})\textemdash{} and other algebraic interpretations are available, ours is the first \emph{combinatorial} interpretation.

We let $n=\sum_{i}\gamma_i$ and then identify $\vertices(G)$ with $[n]$ by relabeling  $v_{1,1},\dots,v_{1,\gamma_1},\dots,v_{k,1},\dots, v_{k,\gamma_k}$ with integers from  $1$ through $n$ in that order. This allows us to identify the bi-indexed variables $x_{1,1},\dots, x_{k,\gamma_k}$ with
$x_1,\dots,x_{n}$.

\begin{theorem}
\label{thm:numerical_dt_break}
The number of $S_{\gamma}$ orbits on $\brkd(G)$ equals $\mathrm{DT}_{\gamma}$.
\end{theorem}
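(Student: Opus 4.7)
The plan is to chain together the isomorphisms established earlier and specialize the Hilbert series identity of Theorem~\ref{thm:graded_multiplicity_quantum_DT} at $q = 1$. First I would observe that by construction of the covering graph $G = G_{Q,\gamma}$, the product group $S_\gamma = S_{\gamma_1} \times \cdots \times S_{\gamma_k}$ sits inside $\aut{G}$: it acts by permuting the vertices $v_{i,1},\dots,v_{i,\gamma_i}$ within each block $\vertices_i$, and this preserves the edge multiplicities by construction. Hence any $\aut{G}$-equivariant statement from Section~\ref{sec:orbit_harmonics} restricts to an $S_\gamma$-equivariant statement.

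Next I would invoke Theorem~\ref{thm:piecing_things}, which gives an isomorphism $\bQ[\brkd(G)] \cong \mc{P}(G)$ of ungraded $\aut{G}$-modules, hence in particular of ungraded $S_\gamma$-modules. Taking $S_\gamma$-invariants on both sides yields
\[
\bQ[\brkd(G)]^{S_\gamma} \;\cong\; \mc{P}(G)^{S_\gamma}
\]
as $\bQ$-vector spaces. The left-hand side is the invariant subspace of a genuine permutation module on a finite $S_\gamma$-set, so its dimension equals the number of $S_\gamma$-orbits on $\brkd(G)$ (a basis is given by the indicator functions of the orbits).

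Finally, by Theorem~\ref{thm:graded_multiplicity_quantum_DT} we have $\hilb(\mc{P}(G)^{S_\gamma}) = \tilde{\Omega}_\gamma(q)$, and setting $q = 1$ gives
\[
\dim \mc{P}(G)^{S_\gamma} \;=\; \tilde{\Omega}_\gamma(1) \;=\; \mathrm{DT}_\gamma.
\]
Combining these two equalities completes the proof.

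There is essentially no obstacle here; all the real work was already done in establishing Theorem~\ref{thm:piecing_things} (the orbit-harmonics identification of $\bQ[\brkd(G)]$ with the central $P$-space) and Theorem~\ref{thm:graded_multiplicity_quantum_DT} (the identification of $\mc{P}(G)^{S_\gamma}$ with the Efimov invariant space). The only thing to verify explicitly is that $S_\gamma \subseteq \aut{G}$, which is immediate from the definition of $G_{Q,\gamma}$ and was already used in the proof of Lemma~\ref{lem:efimov_translated}.
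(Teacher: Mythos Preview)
Your proposal is correct and follows essentially the same route as the paper: invoke Theorem~\ref{thm:piecing_things} to identify $\bQ[\brkd(G)]$ with $\mc{P}(G)$ as ungraded $\aut{G}$-modules, restrict to $S_\gamma\subseteq\aut{G}$, take invariants, and then use Theorem~\ref{thm:graded_multiplicity_quantum_DT} at $q=1$ to match $\dim \mc{P}(G)^{S_\gamma}$ with $\mathrm{DT}_\gamma$. The paper's proof is the same argument in slightly different order.
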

\begin{proof}
Theorem~\ref{thm:piecing_things} gives the isomorphism of graded $\aut{G}$-representations:
\begin{align}
\mc{P}(G)\cong \bQ[\bfx{n}]/\mc{I}(G),
\end{align}
and then says that the right-hand side  is isomorphic to $\bQ[\brkd(G)]$ as an ungraded $\aut{G}$-representation.
Now, $S_{\gamma}$ is a subgroup of $\aut{G}$. By taking $S_{\gamma}$-invariants we get
\begin{align}
\dim(\mc{P}(G)^{S_{\gamma}})=\dim(\bQ[\brkd(G)]^{S_{\gamma}}).
\end{align}
The left-hand side equals $\mathrm{DT}_{\gamma}=\tilde{\Omega}_{\gamma}(1)$, whereas the right-hand side equals the number of $S_{\gamma}$-orbits on $\brkd(G)$. The claim follows.
\end{proof}

\begin{remark}
  \emph{
The preceding proof employs the consequences of the point-orbit method in a crucial way.
Given the succinctness and simplicity of the statement, one naturally wonders if there is an alternative proof.}
\end{remark}

\begin{example}
\emph{
Consider the quiver $Q$ in Figure~\ref{fig:Gqgamma_2}. Pick $\gamma=(2,3)$.
The resulting graph $G$ is the bipartite graph $K_{2,3}$.
We reindex our variables $X_1= x_{11}$, $x_2=x_{12}$, $x_3=x_{21}$, $x_4=x_{22}$, and $x_5=x_{32}$.
We let $S_2\times S_3$ act on $\bQ[x_1,\dots,x_5]$ by letting $S_2$ (respectively $S_3$) act on $x_1,x_2$ (respectively $x_3,x_4,x_5$).
}

\emph{
 $\mc{P}(G)$ is spanned by $p_Y$ for slim subgraphs $Y$ which may be obtained as the $S_{\gamma}$-orbit of elements in $\{1,x_1-x_3, (x_1-x_3)(x_1-x_4), (x_1-x_3)(x_2-x_3) \}$.
The space $\mc{P}(G)^{S_\gamma}$ has basis elements
\[
\{1,\sum_{\sigma\in S_{\gamma}}\sigma\cdot(x_1-x_3), \sum_{\sigma\in S_{\gamma}} \sigma\cdot (x_1-x_3)(x_1-x_4), \sum_{\sigma\in S_{\gamma}}\sigma\cdot (x_1-x_3)(x_2-x_3)).\}.
\]
Explicitly, other than the constant polynomial $1$, these equal
\begin{align*}
&3(x_1+x_2)-2(x_3+x_4+x_5)\\
&3(x_1^2+x_2^2)-2(x_1+x_2)(x_3+x_4+x_5)+2(x_3x_4+x_3x_5+x_4x_5)\\
& 6x_1x_2-2(x_1+x_2)(x_3+x_4+x_5)+2(x_3^2+x_4^2+x_5^2).
\end{align*}
We thus infer that
\[
\tilde{\Omega}_{\gamma}(q)=1+q+2q^2,
\]
and therefore that $\tilde{\Omega}_{\gamma}(1)=4$. Going back to Example~\ref{ex:break_divisors_orbits}, this agrees with the number of $(S_2\times S_3)$-orbits on $\brkd(K_{2,3})$.
}

\begin{figure}
\includegraphics[scale=0.75]{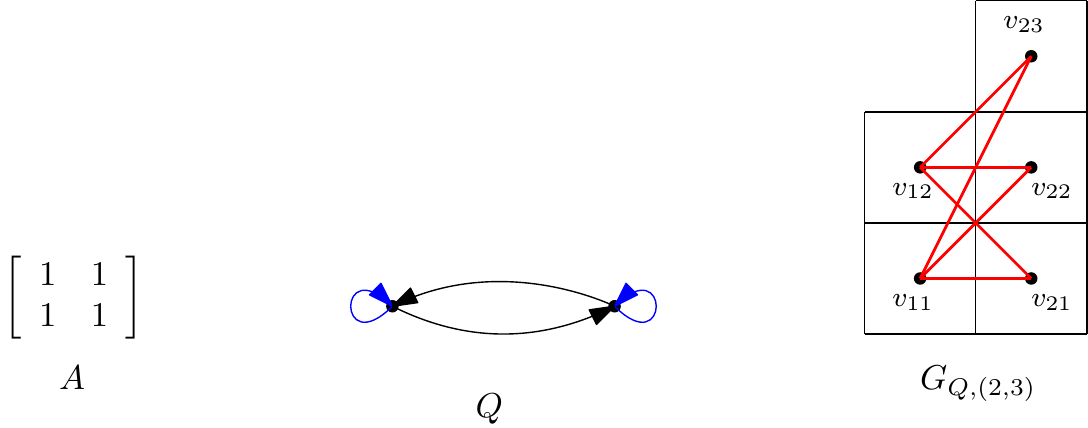}
\caption{A symmetric matrix, its associated quiver, an instance of $G_{Q,\gamma}$.}
\label{fig:Gqgamma_2}
\end{figure}
\end{example}

\begin{remark}
\emph{This example (and others) can be independently verified using a special property of Donaldson-Thomas invariants for quivers with stability; see \cite{ReinekeSmall} and the references therein.}

\emph{These more refined invariants are defined whenever the restriction of the Euler form of a quiver $Q$ to a level set of a stability function $\Theta$ on $Q$ is symmetric \cite[Section 2.5.]{ReinekeSmall}. If the dimension vector $\gamma$ is indivisible, and $\Theta$ is sufficiently generic, the refined DT invariants equals the Poincar\'e polynomial in cohomology of a smooth moduli space of $\Theta$-stable representations of the quiver, which can be computed using a resolved Harder-Narasimhan recursion \cite[Theorem 2.2.]{ReinekeSmall}. If $Q$ is already symmetric, the refined DT invariant equals the ordinary one (a special case of \cite[Corollary 4.4]{ReinekeSmall}), and this allows for its computation without reference to Euler product factorization of a motivic generating series.}
\end{remark}

\subsection{Linking Efimov and Hausel--Sturmfels}
Note that one consequence of Corollary~\ref{cor:J_gamma_revisited}, by invoking~\eqref{eq:hilb_p(x)}, is that
\begin{align}
  \hilb(\mc{P}(G))=q^{g(G)}T_G(1,q^{-1}).
\end{align}
That the Tutte polynomial appears naturally after we have recast Efimov's construction suggests a concrete connection with work of Hausel--Sturmfels \cite{HS02}; see also recent work of Abdelgadir--Mellit--Rodriguez-Villegas \cite{AMV21} where the hint is explicit in the title itself.
With the benefit of hindsight, we arrive at the curious fact that Efimov's construction \cite[Section 3]{Efi} is precisely one of three constructions for the cohomology rings of toric Nakajima quiver varieties given in \cite{HS02}.
We simply recall the pieces that we need and refer the reader to \emph{loc. cit.} for more.

Like in the previous subsection, we work with the identification $\vertices(G)=[n]$.
Orient edges $\{i<j\}\in E(G)$ so that they point from from $j$ to $i$.
The resulting directed graph may itself be treated as a quiver, which we call $\tilde{Q}$.
Attached to this quiver, and by taking the dimension vector $(1^n)$, one obtains a toric Nakajima quiver variety $Y(\tilde{Q})$.
We note that the definition for this variety in \cite{HS02} has an additional parameter $\theta$. While this parameter plays a role in determining the variety, the description of its cohomology ring does not depend on $\theta$.

By unwinding the definitions in \cite[Section 7]{HS02}, it can be checked that the `economical' presentation for the cohomology ring $H^*(Y(\tilde{Q});\bQ)$ is what is discussed in Section~\ref{subsec:variations}. Indeed, the matrix $A$ employed by Hausel--Sturmfels to determine $Y(\tilde{Q})$ is the oriented incidence matrix for $G$, i.e. its columns are obtained by first taking the vectors $e_i-e_j$ for edges $\{i<j\}\in E(G)$ and subsequently striking out the first row.
The quotienting ideal on the right-hand side of \cite[Equation 37]{HS02} that describes $H^*(Y(\tilde{Q});\bQ)$ is, in the language of Section~\ref{sec:central_zonotopal}, equal to the cocircuit ideal $I(A)$ as defined in  equation~\eqref{eq:def_cocircuit_ideal}.
In summary, the quantum DT invariants are determined by the $S_\gamma$-invariant space of the cohomology ring of the toric Nakajima quiver variety $Y(\tilde{Q})$.

While the similarity of this last statement to a result of Hausel--Letellier--Rodriguez-Villegas \cite[Corollary 1.5, Equation 1.10]{HLRV13} is striking, the quiver variety in \emph{loc. cit.} is not a toric Nakajima quiver variety. In particular, our result is not subsumed by results in \cite{HLRV13}.

Taking a cue from \cite{HLRV13} we now discuss some more representation-theoretic aspects of the $S_{\gamma}$-module $\mc{P}(G)$.

\section{Actions have consequences}
\label{sec:applications}

We now focus on representation-theoretic applications of the theory developed in this article, with motivation stemming from work of Berget--Rhoades \cite{BR14} (see also work of Berget \cite{Ber18} in the context of internal zonotopal algebras).
In the interest of brevity, we assume that the reader is conversant with the vocabulary of symmetric functions and associated combinatorics in the context of symmetric group representations, and refer them to \cite{Mac95, St99} for more on this front.
We let $\Lambda$ denote the ring of symmetric functions. We define the \emph{Frobenius characteristic map} $\frob:\oplus_{n\geq 0}\mathrm{Rep}(S_n) \to \Lambda$ by decreeing that  the irreducible representation indexed by the partition $\lambda\vdash n$ is sent to Schur function $s_{\lambda}$, and then extending linearly.
 Here $\mathrm{Rep}(S_n)$ denotes the representation ring of $S_n$.
  If $V=\oplus_{d\geq 0} V_d$ is a graded $S_n$-module, then we define the \emph{graded Frobenius characteristic} $\grfrob(V;q)$
  as $\sum_{d\geq 0}q^d \frob(V).$
Following earlier convention, unless otherwise stated, we take $G$ to be the graph $G_{Q,\gamma}$.

\subsection{The sign-isotypic component also computes DT invariants}
\label{subsec:sign_isotypic}

We do not currently have a decomposition for  $\mc{P}(G)$ into $S_\gamma$-irreducibles. Having assigned meaning to the $S_\gamma$-invariant space, we shed some light on a close relative thereof.

Recall that $n=|\vertices(G)|$.
Using the \emph{sign} representation $\varepsilon_n$ of  $S_{n}$, we can obtain the sign representation $\varepsilon_{\gamma}$ for $S_{\gamma}$ by restriction.
Given an $S_{\gamma}$-module $V$, let us denote the sign-isotypic component by $V^{\varepsilon_{\gamma}}$.

Assume like before that $A=(a_{ij})_{i,j\in [k]}$ is a symmetric matrix with nonnegative entries which in turn determines a symmetric quiver $Q$. Let $\gamma=(\gamma_1,\dots,\gamma_k)$ be a dimension vector. Define $Q^+$ to be the symmetric quiver corresponding to the matrix $A+I_k$ where $I_k$ denotes the $k\times k$ identity matrix. Equivalently, $Q^+$ is obtained by adding one new loop at each vertex of $Q$. We let
\[
G^+\coloneqq G_{Q^+,\gamma}.
\]

We have the following result saying that $\varepsilon_{\gamma}$-isotypic components also compute DT invariants.
Indeed, our slim subgraph perspective renders this fact quite transparent.

\begin{proposition}
We have that
\[
\dim(\mc{P}(G^+)^{\varepsilon_{\gamma}})=\mathrm{DT}_{Q,\gamma}.
\]
\end{proposition}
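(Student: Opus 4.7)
The plan is to construct a $\bQ$-linear isomorphism
\[
\Phi : \mc{P}(G)^{S_\gamma}\xrightarrow{\ \cong\ }\mc{P}(G^+)^{\varepsilon_\gamma}, \qquad f\longmapsto \Delta_\gamma\cdot f,
\]
where
\[
\Delta_\gamma:=\prod_{i=1}^{k}\prod_{1\le\alpha<\beta\le\gamma_i}(x_{i,\beta}-x_{i,\alpha})
\]
is the generalized Vandermonde attached to $\gamma$. Combined with Theorem~\ref{thm:graded_multiplicity_quantum_DT}, which gives $\dim\mc{P}(G)^{S_\gamma}=\tilde\Omega_\gamma(1)=\mathrm{DT}_{Q,\gamma}$, this would complete the proof.

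Well-definedness of $\Phi$ comes from the following observation. By construction $E(G^+)=E(G)\sqcup E^{\mathrm{ex}}$, where $E^{\mathrm{ex}}$ consists of exactly one additional edge joining $v_{i,\alpha}$ and $v_{i,\beta}$ for each $1\le i\le k$ and $1\le\alpha<\beta\le\gamma_i$ (namely, the edges arising from adding one new loop at each vertex of $Q$). Under the edge-to-polynomial correspondence, $\Delta_\gamma=\pm p_{E^{\mathrm{ex}}}^{+}$. For any slim $Y$ in $G$, the complement of $Y\sqcup E^{\mathrm{ex}}$ in $E(G^+)$ equals $E(G)\setminus Y$, a connected spanning subgraph, so $Y\sqcup E^{\mathrm{ex}}$ is slim in $G^+$. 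Hence $\Delta_\gamma\cdot p_Y=\pm p_{Y\sqcup E^{\mathrm{ex}}}^{+}\in\mc{P}(G^+)$, and consequently $\Delta_\gamma\cdot\mc{P}(G)\subseteq\mc{P}(G^+)$. Since $\Delta_\gamma$ transforms by the character $\varepsilon_\gamma$, the map $\Phi$ lands in $\mc{P}(G^+)^{\varepsilon_\gamma}$. Injectivity is immediate since $\bQ[\bfx{n}]$ is an integral domain and $\Delta_\gamma\neq 0$.

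Surjectivity is the substantive step. Given $g\in\mc{P}(G^+)^{\varepsilon_\gamma}$, the classical fact that a polynomial antisymmetric under $S_{\gamma_1}\times\cdots\times S_{\gamma_k}$ acting on disjoint blocks of variables is divisible by the associated generalized Vandermonde yields $g=\Delta_\gamma\cdot f$ for a unique $S_\gamma$-invariant polynomial $f$. The remaining task is to verify $f\in\mc{P}(G)$. My plan is to expand $g=\sum_{Y^+}c_{Y^+}p_{Y^+}^{+}$ over slim $Y^+\subseteq E(G^+)$, apply the Reynolds projector $\pi_{\varepsilon_\gamma}=\frac{1}{|S_\gamma|}\sum_\sigma\varepsilon_\gamma(\sigma)\sigma$ (which fixes $g$), and show that $\pi_{\varepsilon_\gamma}(p_{Y^+}^{+})\in\Delta_\gamma\cdot\mc{P}(G)^{S_\gamma}$ for every slim $Y^+$. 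When $Y^+=Y\sqcup E^{\mathrm{ex}}$ for slim $Y\subseteq E(G)$, the factorization $p_{Y^+}^{+}=\pm\Delta_\gamma\cdot p_Y$ combined with the identity $\pi_{\varepsilon_\gamma}(\Delta_\gamma h)=\Delta_\gamma\cdot\pi_{\mathrm{triv}}(h)$ gives the claim directly, since $\pi_{\mathrm{triv}}(p_Y)\in\mc{P}(G)^{S_\gamma}$. When $Y^+$ misses at least one extra edge $e^{*}\in E^{\mathrm{ex}}$ between $v_{i,\alpha}$ and $v_{i,\beta}$, an orbit-pairing argument using the transposition $\tau=(\alpha,\beta)\in S_{\gamma_i}$ should apply: either $\tau$ fixes $p_{Y^+}^{+}$ with sign $+1$ (in which case $\pi_{\varepsilon_\gamma}(p_{Y^+}^{+})=0$ since $\pi_{\varepsilon_\gamma}=-\pi_{\varepsilon_\gamma}$), or $\tau$ produces cancellations with $\pi_{\varepsilon_\gamma}(p_{\tau Y^+}^{+})$ that reduce to $S_\gamma$-orbits of slim subgraphs with strictly more $E^{\mathrm{ex}}$-edges, concluding by induction on the number of missing extra edges.

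The principal technical obstacle is this orbit-pairing analysis, which must carefully account for the multi-edge structure of $G^+$ (where $a_{ii}$ parallel edges run between $v_{i,\alpha}$ and $v_{i,\beta}$, versus $a_{ii}-1$ in $G$) as well as the signs produced when transpositions move edges to distinct edges rather than merely changing the sign of an associated linear form.
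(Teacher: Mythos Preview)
Your overall strategy---multiplication by the generalized Vandermonde $\Delta_\gamma$ to pass from $\mc{P}(G)^{S_\gamma}$ to $\mc{P}(G^+)^{\varepsilon_\gamma}$---is exactly the paper's, and your verification that $\Delta_\gamma\cdot\mc{P}(G)\subseteq\mc{P}(G^+)$ via slim subgraphs matches as well. The paper's own treatment of surjectivity is terse: it records only that any $\varepsilon_\gamma$-alternating polynomial is $\Delta_\gamma$ times an $S_\gamma$-invariant, without explaining why that invariant must lie in $\mc{P}(G)$ rather than merely in $A_\gamma^{\prim}$. You are right to isolate this as the substantive step.

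However, your proposed orbit-pairing induction does not work as stated. If $e^*\in E^{\mathrm{ex}}$ joins $v_{i,\alpha}$ and $v_{i,\beta}$ and is missing from $Y^+$, the transposition $\tau=(\alpha,\beta)$ \emph{fixes} $e^*$ (and every parallel edge between those two vertices), so $\tau Y^+$ is still missing $e^*$; the number of missing extra edges does not decrease. Moreover, since $p_{\tau Y^+}^{+}=\pm\,\tau\cdot p_{Y^+}^{+}$ one has $\pi_{\varepsilon_\gamma}(p_{\tau Y^+}^{+})=\pm\,\pi_{\varepsilon_\gamma}(p_{Y^+}^{+})$, so pairing these two terms yields no new information. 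A cleaner way to close the gap is to use the direct sum $A_\gamma^{\prim}=\mc{P}(G)\oplus\hat I(G)$: write the quotient as $h=h_1+h_2$ accordingly and observe that for any bond $\partial_G(S)$ the edge set $E^{\mathrm{ex}}\sqcup\partial_G(S)$ contains the bond $\partial_{G^+}(S)$ of $G^+$, so $\Delta_\gamma\cdot p_{\partial_G(S)}=\pm\,p_{E^{\mathrm{ex}}\sqcup\partial_G(S)}$ is a multiple of $p_{\partial_{G^+}(S)}$ and hence lies in $\hat I(G^+)$. This gives $\Delta_\gamma\cdot\hat I(G)\subseteq\hat I(G^+)$, whence $\Delta_\gamma h_2\in\hat I(G^+)\cap\mc{P}(G^+)=0$, and therefore $h=h_1\in\mc{P}(G)$ as required.
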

\begin{proof}
	Note that the right-hand side above is simply $\dim(\mc{P}(G)^{S_{\gamma}})$.
	Thus it suffices to establish an isomorphism between $\mc{P}(G^+)^{\varepsilon_{\gamma}}$ and $\mc{P}(G)^{S_{\gamma}}$. Consider the map sending
	\begin{align}
	\label{eq:vandermonde_multiplication}
	f\in \mc{P}(G) \mapsto f^+\coloneqq f \cdot \prod_{1\leq i\leq k}\prod_{1\leq \alpha_1<\alpha_2\leq \gamma_i}(x_{i,\alpha_1}-x_{i,\alpha_2})
	\end{align}
	Observe that the innermost product is  the Vandermonde determinant on the variables $x_{i,1}$ through $x_{i,\gamma_i}$. Each linear form $(x_{i,\alpha_1}-x_{i,\alpha_2})$ corresponds to an edge in $E(G^+)\setminus E(G)$.

	First we claim that $f^+\in \mc{P}(G^+)$. It suffices to verify this for a slim $Y\subset E(G)$. The polynomial $p_Y^+$ is attached to the set of edges $Y^+\coloneqq \left(E(G^+)\setminus E(G)\right) \sqcup Y$. This and the fact that $Y$ is slim together imply that $Y^+$ is slim in $G^+$. Thus $f\in \mc{P}(G^+)$ indeed.

	Now the fact that the association in \ref{eq:vandermonde_multiplication} is an isomorphism between $\mc{P}(G)^{S_{\gamma}}$ and $\mc{P}(G^+)^{\varepsilon_{\gamma}}$ is straightforward.
	Indeed any polynomial $f$ in the variables $(x_{i,\alpha})_{1\leq i\leq k,1\leq \alpha\leq \gamma_i}$ with the property that $\sigma \cdot f=\varepsilon(\sigma)$ for $\sigma \in S_{\gamma}$ must be divisible by $\prod_{1\leq i\leq k}\prod_{1\leq \alpha_1<\alpha_2\leq \gamma_i}(x_{i,\alpha_1}-x_{i,\alpha_2})$ with the ratio being $S_{\gamma}$-invariant.
	This concludes the proof.
\end{proof}

\subsection{The top degree of $\mc{P}(G)$ as an $S_\gamma$-module}
\label{subsec:top_degree}

Our second result generalizes a result of Berget--Rhoades \cite[Theorem 5]{BR14} which identifies the top degree graded piece of $\mc{P}(K_n^m)$ with $\mathrm{Lie}_n\otimes \varepsilon_n$.
Here $\mathrm{Lie}_n$ is the well-known \emph{Lie representation} of $S_n$ with the property that upon restriction to $S_{n-1}$, one recovers the regular representation. In particular, the dimension of $\mathrm{Lie}_n$ is $(n-1)!$.
One description of $\mathrm{Lie}_n$ is as the action on $S_n$ on the multilinear part of the free Lie algebra on $n$ symbols.

The graph $G$ (with $\vertices(G)$ identified with $[n]$ like before) determines a hyperplane arrangement $\mc{A}_G$ in $\bC^n$ by considering the union of hyperplanes $x_i-x_j=0$ for edges $\{i,j\}\in E(G)$.
We consider the \emph{de Rham cohomology ring} $H^*(\bC^n\setminus \mc{A}_G)$ over $\bC$ of the complement $\bC^n\setminus \mc{A}_G$.
Thanks to seminal work of Orlik--Solomon \cite{OS80,OT92}, $H^*(\bC^n\setminus \mc{A}_G)$ is very well understood and it is the exterior algebra over the generators $\mathrm{d}\log(\beta)=\frac{d\beta}{\beta}$ where $\beta$ ranges over the linear forms that cut out $\mc{A}_G$, which in our case are the $x_i-x_j$ for edges $\{i,j\}\in E(G)$.
Recall our collection of vectors $\hat{X}_G$ introduced in Section~\ref{subsec:variations}, and endow this collection with a total order.
The top degree graded piece of $H^*(\bC^n\setminus \mc{A}_G)$, denoted by $H^*(\bC^n\setminus \mc{A}_G)_{\mathrm{top}}$, has a basis indexed by bases in $\hat{X}_G$ with external activity $0$, i.e. unbroken bases. Such bases also form a basis for the top degree of $\mc{P}(G)$, denoted by $\mc{P}(G)_{\mathrm{top}}$. This is easily seen from the expression~\eqref{eq:hilb_p(x)} describing $\hilb(\mc{P}(G))$ as a specialization of the Tutte polynomial of $\hat{X}_G$.
Thus we see that
\begin{align}
\label{eq:equality_of_dimensions}
\dim(\mc{P}(G)_{\mathrm{top}})=\dim(H^*(\bC^n\setminus \mc{A}_G)_{\mathrm{top}}).
\end{align}

\begin{remark}
The quantity in~\eqref{eq:equality_of_dimensions} is known to equal the number of bounded regions in the complement $\bR^n\setminus \mc{A}_G$ of the \emph{real} arrangement.
\end{remark}

In fact, following Berget--Rhoades \cite{BR14}, we can say more.
\begin{proposition}
\label{prop:top_degree}
We have the following $S_{\gamma}$-isomorphism:
\[
\mc{P}(G)_{\mathrm{top}}\cong H^*(\bC^n\setminus \mc{A}_G)_{\mathrm{top}}\otimes \varepsilon_{\gamma_1}^{a_{11}-1}\otimes \cdots \otimes \varepsilon_{\gamma_k}^{a_{kk}-1}.
\]
Recall that $a_{ii}-1$ is one less than the number of loops at vertex $i$ in $Q$.
\end{proposition}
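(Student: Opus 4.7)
The plan is to match explicit bases on the two sides via a ``master polynomial'' bridge. As noted in the discussion preceding the proposition, both $\mc{P}(G)_{\mathrm{top}}$ and $H^*(\bC^n\setminus\mc{A}_G)_{\mathrm{top}}$ carry bases indexed by the unbroken spanning trees of $G$. For such a tree $T$, let
\[
P_T = \prod_{e\in E(G)\setminus T}\ell_e \quad\text{and}\quad \Omega_T = \bigwedge_{e\in T}\mathrm{d}\log(\ell_e)
\]
denote the respective basis elements. These are linked through the master polynomial $D = \prod_{e\in E(G)}\ell_e$ by the identity $P_T = D/p_T$, where $p_T = \prod_{e\in T}\ell_e$.

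The first substantive step is to identify the $S_\gamma$-character carried by $D$. For a transposition $\sigma \in S_{\gamma_i}$ swapping $v_{i,\alpha_1}$ and $v_{i,\alpha_2}$, each edge not directly connecting this pair either is fixed by $\sigma$, or lies in a $\sigma$-orbit of size two whose orientation-reversal signs cancel. The only uncancelled contributions come from the $a_{ii}-1$ parallel edges between $v_{i,\alpha_1}$ and $v_{i,\alpha_2}$, each of which contributes a factor of $-1$. Hence $\sigma\cdot D = (-1)^{a_{ii}-1} D$, and $D$ spans the one-dimensional character $\chi = \bigotimes_{i=1}^k \varepsilon_{\gamma_i}^{a_{ii}-1}$ as an $S_\gamma$-module.

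Define $\Phi\colon H^*(\bC^n\setminus\mc{A}_G)_{\mathrm{top}}\otimes\chi \to \mc{P}(G)_{\mathrm{top}}$ on the basis by $\Omega_T\otimes v \mapsto P_T$, where $v$ is a generator of $\chi$. Since both spaces have the same dimension (the number of unbroken spanning trees), it is enough to show that $\Phi$ is $S_\gamma$-equivariant. Here the crucial observation is that $\mathrm{d}\log$ is insensitive to scalar signs: the sign picked up by $\sigma\cdot p_T$ in the denominator of $D/p_T$ matches the reordering sign incurred by $\sigma\cdot\Omega_T$ on the wedge product side, leaving only the global character $\chi$ coming from $\sigma\cdot D$, which is exactly absorbed by the twist on the left-hand side.

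The main technical obstacle is that $S_\gamma$ need not preserve the set of unbroken spanning trees: $\sigma$ may carry an unbroken $T$ to a broken $\sigma(T)$. In that case both $\sigma\cdot\Omega_T$ and $\sigma\cdot P_T$ must be re-expanded in their respective unbroken bases, via Orlik--Solomon straightening on the cohomology side and via the central power ideal $\mc{I}(G)$ on the polynomial side. Verifying that these two parallel straightening procedures match term-by-term, modulo the character $\chi$, is the delicate step, and rests on the fact that both sets of relations share the same broken-circuit structure inherited from the graphic matroid underlying $\mc{A}_G$.
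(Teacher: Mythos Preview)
Your approach shares the paper's core idea---the ``master polynomial'' $D=\prod_{e\in E(G)}\ell_e$ and its $S_\gamma$-character $\chi$---but you make life harder than necessary by defining $\Phi$ on a basis. This forces you into the straightening problem you flag at the end, which you do not actually carry out. That is a genuine gap: matching the Orlik--Solomon relations against the power-ideal relations term by term is not obvious, and you have only asserted it works.

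The paper avoids this entirely by defining the map \emph{globally}, not on a basis. Fix the volume form $\omega=\mathrm{d}(x_1-x_2)\wedge\cdots\wedge\mathrm{d}(x_{n-1}-x_n)$ on the hyperplane $V$, and send
\[
p_Y\;\longmapsto\; p_Y\cdot \omega/D.
\]
This is visibly linear in $p_Y$, so there is nothing to check about compatibility with relations. One then only needs to verify (i) that the image lands in $H^*(\bC^n\setminus\mc{A}_G)_{\mathrm{top}}$, which follows because for any spanning tree $T$ the edge-forms give another basis of $V$, so $\omega=\pm\,\mathrm{d}\ell_{e_1}\wedge\cdots\wedge\mathrm{d}\ell_{e_{n-1}}$ and the $D$ in the denominator cancels to leave $\pm\,\Omega_T$; and (ii) equivariance, which is now a direct computation of how $\sigma$ scales $\omega$ (by $\varepsilon_\gamma(\sigma)$, since $\bigwedge^{n-1}V$ is the sign representation) and $D$ (your $\chi$ computation). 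The ratio of these two characters is exactly the twist in the statement.

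In short: your map is the inverse of the paper's, but by writing it as multiplication by a fixed rational top-form rather than as an assignment on unbroken trees, the paper never has to worry about whether $\sigma(T)$ is still unbroken. You can salvage your argument the same way---observe that $\Omega_T\mapsto P_T$ is the restriction to a basis of the global map $\eta\mapsto D\cdot(\eta/\omega)$, and then equivariance is immediate.
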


\begin{proof}
Note that $\mc{P}(G)_{\mathrm{top}}$ is the span of all $p_Y$ for $Y\subset E(G)$, where $E(G)\setminus Y$ is the edge set of a spanning tree in $G$. For a fixed such $Y$, consider the map
\begin{align}
\label{eq:association_top_degree}
p_Y\mapsto p_Y\cdot (\mathrm{d}(x_1-x_2)\wedge \cdots \wedge \mathrm{d}(x_{n-1}-x_n))/\prod_{\{i,j\}\in E(G)}(x_i-x_j).
\end{align}
We claim that the right-hand side is indeed in $H^*(\bC^n\setminus \mc{A}_G)$. Note that $\{x_1-x_2,\dots,x_{n-1}-x_n\}$ are a basis for an $(n-1)$-dimensional space;\footnote{The reflection representation of $S_n$} let us call it $V$. Thus $\bigwedge^{n-1}V$ is one dimensional. If $\{i_1,j_1\}, \dots, \{i_{n-1},j_{n-1}\}$ are the edges of any spanning tree of $G$, then $\{x_{i_1}-x_{j_1},\dots,x_{i_{n-1}}-x_{j_{n-1}}\}$ is also a basis for $V$. Thus we must have that
\begin{align}
\mathrm{d}(x_1-x_2)\wedge \cdots \wedge \mathrm{d}(x_{n-1}-x_n)=\pm \mathrm{d}(x_{i_1}-x_{j_1})\wedge \cdots \wedge \mathrm{d}(x_{i_{n-1}}-x_{j_{n-1}}).
\end{align}
Thus the right-hand side in~\eqref{eq:association_top_degree} is, up to a sign, equal to
\[
\mathrm{d}\log(x_{i_1}-x_{j_1}) \wedge \cdots \wedge \mathrm{d}\log(x_{i_{n-1}}-x_{j_{n-1}}),
\]
which is indeed an element of $H^*(\bC^n\setminus \mc{A}_G)_{\mathrm{top}}$. It also follows that the map is surjective. The equality of dimensions in~\eqref{eq:equality_of_dimensions} implies that we have a vector space isomorphism.

We proceed to verify that it is in fact an $S_{\gamma}$-module isomorphism.
Now pick $\sigma\coloneqq \sigma^{(1)}\cdots \sigma^{(k)}\in S_{\gamma_1}\times \cdots\times S_{\gamma_k}$.
We know that $\bigwedge^{n-1}(V)$ carries the sign representation of $S_n$, and thus, on the one hand we get
\begin{align}
\sigma\cdot	(\mathrm{d}(x_1-x_2)\wedge \cdots \wedge \mathrm{d}(x_{n-1}-x_n))=(\mathrm{d}(x_1-x_2)\wedge \cdots \wedge \mathrm{d}(x_{n-1}-x_n))\prod_{1\leq i\leq k}\varepsilon_{\gamma_i}(\sigma^{(i)}).
\end{align}
On the other hand we get
\begin{align}
\sigma \cdot \prod_{\{i,j\}\in E(G)}(x_i-x_j)=\prod_{\{i,j\}\in E(G)}(x_i-x_j)\prod_{1\leq i\leq k}(\varepsilon_{\gamma_i}(\sigma^{(i)}))^{a_{ii}}.
\end{align}
Thus the action of $\sigma$ on $\mc{P}(G)$ is the action on $H^*(\bC^n\setminus \mc{A}_G)_{\mathrm{top}}$ twisted precisely by the appropriate powers of   sign representations.
\end{proof}

\begin{remark}
Note that if all $a_{ii}$ are odd positive integers, i.e. we have an odd number of loops at each vertex, then we get the isomorphism $\mc{P}(G)_{\mathrm{top}}\cong H^*(\bC^n\setminus \mc{A}_G)_{\mathrm{top}}$ holds.
\end{remark}

\subsection{The $(m+1)$-loop quiver}
\label{subsec:loop quiver}

In the notation of this article, Berget--Rhoades \cite{BR14} consider the $S_n$-modules $\mc{P}(K_n^m)$ and establish several interesting results.
While the graded Frobenius characteristics of these modules  remains elusive, we are able to shed light on related matters. In particular, we show that this module encodes a large class of DT invariants.
We begin with the case of the $(m+1)$-loop quiver.

Let $m$ be a positive integer.
We begin by revisiting the case of the $(m+1)$-loop quiver $Q$ with dimension vector $\gamma=(n)$ where $n\geq 1$.
In this case, $G_{Q,\gamma}$ equals the complete multipartite graph $K_n^{m}$, i.e. the graph on $n$ vertices with $m$ edges between any two distinct vertices.
In this case $\aut{G}$ is clearly the symmetric group $S_n$.
Theorem~\ref{thm:piecing_things} then gives us (isomorphic) spaces with dimension $m^{n-1}n^{n-2}$.

In view of Theorem~\ref{thm:piecing_things} we can now connect the space $\mc{P}(K_n^{m})$ to $\bQ[\brkd(K_n^m)]$ and, among other things, resolve \cite[Conjecture 3.3]{KT21} and recover one of the main results in \cite{BR14}.
\begin{corollary}
\label{cor:p-space-corollary}
The following hold.
\begin{enumerate}
\item We have the isomorphism $\mc{P}(K_n^m)\cong_{S_n} \bQ[\brkd(K_n^m)]$ of ungraded representations. In particular, the $\frob(\mc{P}(K_n^m))$ is $h$-positive.
\item The graded multiplicity of the trivial representation of $S_n$ in $\mc{P}(K_n^m)$ is given by the quantum DT invariant $\tilde{\Omega}_{(n)}(q)$.
At $q=1$ we obtain
\begin{align}
\label{eq:explicit_m_loop_quiver}
\dim(\mc{P}(K_n^m)^{S_n})=\mathrm{DT}_{Q,(n)}=
\frac{1}{mn^2}\sum_{d|n}(-1)^{mn+\frac{mn}{d}}\mu(d)\binom{\frac{(m+1)n}{d}-1}{\frac{n}{d}}.
\end{align}
\item We have $\grfrob(\mc{P}(K_n)\downarrow_{S_{n-1}}^{S_n})=\grfrob(\mathrm{PF}_{n-1})$, where $\mathrm{PF}_{n-1}$ is the parking function representation of $S_{n-1}$, with basis indexed by parking functions and grading given by the sum of entries in a parking function.
\end{enumerate}
\end{corollary}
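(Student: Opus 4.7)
The plan for Part (1) is essentially immediate from Theorem~\ref{thm:piecing_things}: since $\aut{K_n^m} = S_n$, that theorem directly yields the ungraded $S_n$-isomorphism $\mc{P}(K_n^m) \cong \bQ[\brkd(K_n^m)]$. For the $h$-positivity claim I would observe that $\bQ[\brkd(K_n^m)]$ is a permutation module whose point-stabilizers in $S_n$ are automatically Young subgroups: a break divisor is an integer tuple $(D(1),\dots,D(n))$ permuted coordinate-wise, and the stabilizer of such a tuple is the Young subgroup corresponding to the partition recording multiplicities of coordinate values. Summing the induced characters $\mathrm{ind}_{S_\lambda}^{S_n}(\mathbf{1})$ over orbit representatives then expresses $\frob(\mc{P}(K_n^m))$ as a nonnegative integer combination of complete homogeneous symmetric functions $h_\lambda$.

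For Part (2), I would first identify $K_n^m$ with the covering graph $G_{Q,(n)}$ where $Q$ is the $(m+1)$-loop quiver; since $Q$ has a single vertex we have $S_\gamma = S_n$, and the covering graph construction contributes exactly $m = a_{11}-1$ edges between any two distinct vertices. Theorem~\ref{thm:graded_multiplicity_quantum_DT} then yields
\[
\hilb(\mc{P}(K_n^m)^{S_n}) \;=\; \tilde{\Omega}_{(n)}(q),
\]
which is precisely the graded multiplicity of the trivial $S_n$-representation in $\mc{P}(K_n^m)$. The displayed closed form for $\tilde{\Omega}_{(n)}(1)$ is the classical Reineke formula for numerical Donaldson--Thomas invariants of multi-loop quivers, obtained by number-theoretic M\"obius inversion applied to the plethystic Euler product generating series for $\Omega_{(n)}(q)$; this step is independent of our methods and can simply be cited.

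For Part (3), I would chain three graded $S_{n-1}$-module isomorphisms. First, Theorem~\ref{thm:piecing_things} identifies $\mc{P}(K_n)$ with $\bQ[\bfx{n}]/\mc{I}(K_n)$ as graded $S_n$-modules, and restriction to the subgroup $S_{n-1}\subset S_n$ fixing the last vertex preserves both the underlying vector space and the grading. Second, the strike-out-last-row isomorphism \eqref{eq:strike_out_last_row} descends to a graded $S_{n-1}$-module isomorphism $\bQ[\bfx{n}]/\mc{I}(K_n) \cong \bQ[\bfx{n-1}]/\check{\mc{I}}(K_n)$, and the latter is exactly the Postnikov--Shapiro algebra $\mc{B}_{K_n}$. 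Third, the Postnikov--Shapiro monomial basis for $\mc{B}_{K_n}$ is indexed by $K_n$-parking functions, which coincide with the classical parking functions of length $n-1$; since this basis is manifestly $S_{n-1}$-equivariant in the appropriate sense, comparing graded Frobenius characteristics yields $\grfrob(\mc{P}(K_n)\downarrow_{S_{n-1}}^{S_n}) = \grfrob(\mathrm{PF}_{n-1})$. The main technical obstacle is reconciling grading conventions: the central zonotopal grading on $\mc{B}_{K_n}$ comes from polynomial degree (equivalently, external activity against a fixed edge ordering), whereas the parking-function grading is the sum-of-entries statistic, so the heart of the argument is checking that under the Postnikov--Shapiro bijection $\text{PF}_{n-1}\leftrightarrow \text{basis of }\mc{B}_{K_n}$ these two statistics match, degree by degree.
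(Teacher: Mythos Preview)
Your proposal is correct and follows essentially the same route as the paper's proof for all three parts. One clarification on Part~(3): the ``technical obstacle'' you flag is a non-issue, since the Postnikov--Shapiro basis monomial attached to the parking function $(a_1,\dots,a_{n-1})$ is literally $x_1^{a_1}\cdots x_{n-1}^{a_{n-1}}$, whose polynomial degree is $a_1+\cdots+a_{n-1}$---so the two gradings coincide tautologically and no further matching is required.
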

\begin{proof}
The first statement is simply Theorem~\ref{thm:piecing_things}. The $h$-positivity is a consequence of the permutation action of $S_n$ on break divisors on $K_n^m$, which are lattice points in the permutahedron determined as the convex hull of the $S_n$-orbit of $(m(n-1)-1,\dots,m\cdot 1-1,0)$. Clearly, the stabilizer of any point is a Young subgroup, from which the claim follows.

The second statement is a consequence of Theorem~\ref{thm:graded_multiplicity_quantum_DT} for $G=K_{n}^m$. At $q=1$, we obtain the claimed expression because it equals the number of orbits under the $S_n$ action on $\brkd(K_n^m)$ \cite[Theorem 3.7]{KRT21}. The explicit expression for this specific numerical DT invariant was first computed by Reineke \cite[Theorem 3.2]{Rei12}.

The final statement was established in \cite[Thm. 2]{BR14}. We give an alternative proof.
We have the graded $S_n$-isomorphism $\bQ[\bfx{n}]/\mc{I}(K_n)\cong \mc{P}(K_n)$.
By \cite{PS03} the left-hand side has a monomial basis $\{x_1^{a_1}\cdots x_{n-1}^{a_{n-1}}\}$ indexed by parking functions $(a_1,\dots,a_{n-1})$ of length
$n-1$.
Although the $S_n$-action on this basis is not easy to describe,
the $S_{n-1}$-action (permuting variables $x_1$ through $x_{n-1}$) is easily seen to be permutation action on parking functions.
Since the degree of a basis element is preserved, we get the graded isomorphism upon restriction as claimed.
\end{proof}

Corollary~\ref{cor:p-space-corollary} (3) may also be stated using symmetric function operators. Let $\langle -, - \rangle$ be the {\em Hall inner product}
on $\Lambda$ which declares the Schur basis to be orthogonal and let $s_1^{\perp}: \Lambda \rightarrow \Lambda$ be the degree $-1$ operator
which is adjoint to multiplication by $s_1$ under this inner product.
Corollary~\ref{cor:p-space-corollary} (3) is equivalent to
\begin{equation}
s_1^{\perp} \mathrm{grFrob}(\mc{P}(K_n);q) = (\mathrm{rev}_q \circ \omega)  \nabla e_{n-1} \mid_{t \rightarrow 1}
\end{equation}
where
\begin{itemize}
\item
$e_{n-1}$ is the degree $n-1$ elementary symmetric function,
\item
$\mathrm{rev}_q$ reverses coefficient sequences of polynomials in $q$,
\item $\omega: \Lambda \rightarrow \Lambda$ is the involution interchanging $s_{\lambda}$ and $s_{\lambda'}$, and
\item  $\nabla: \Lambda \rightarrow \Lambda$ is the eigenoperator on the basis $\{ \widetilde{H}_{\lambda}[X;q,t] \}$ of modified Macdonald polynomials
characterized by $\nabla: \widetilde{H}_{\lambda}[X;q,t] \mapsto q^{\sum (i-1) \cdot \lambda_i} t^{\sum (i-1) \lambda_i'} \cdot \widetilde{H}_{\lambda}[X;q,t]$.
\end{itemize}
Combinatorially, the grading on $s_1^{\perp} \mathrm{grFrob}(\mc{P}(K_n);q)$ corresponds to the coarea statistic when we think of parking
functions in terms of Dyck paths.

\begin{example}
\label{ex:motivation_for_more_DTs}
\emph{
Let us take $m=3$ and $n=4$. Then the ungraded $S_4$ module $\mc{P}(K_4^3)$ has Frobenius characteristic given by
\[
10h_{(1, 1, 1, 1)} + 15h_{(2, 1, 1)} + 3h_{(3, 1)}.
\]
This translates to the following Schur expansion:
\[
10s_{(1,1,1,1)} + 45s_{(2,1,1)} + 35s_{(2,2)} + 63s_{(3,1)} + 28s_{(4)}.
\]
From the coefficient of $s_{(4)}$, we infer that for the $4$-loop single vertex quiver $Q$ with dimension vector $\gamma=(4)$, we have
\[
\mathrm{DT}_{Q,\gamma}=28.
\]
}
\end{example}

As mentioned earlier, we do not yet know a combinatorial description for the Schur expansion of $\grfrob(\mc{P}(K_n^m))$. Even for the Schur expansion of $\frob(\mc{P}(K_n^m))$, one could derive an unconvincing expression involving Kostka numbers by appealing to the $h$-expansion.

This having said, the expansion for $\frob(\mc{P}(K_n^m))$ in the basis of monomial symmetric functions carries interesting information\textemdash{} if
$
\frob(\mc{P}(K_n^m))=\sum_{\lambda\vdash n}c_{\lambda}m_{\lambda},
$
then
\[
c_{\lambda}=\dim(\mc{P}(K_n^m)^{S_{\lambda}}).
\]
This given, one may wonder if there are quivers $Q$ for which $\mathrm{DT}_{Q,\lambda}$ equals $c_{\lambda}$. If so, then one may further ask for explicit expressions in the vein of ~\eqref{eq:explicit_m_loop_quiver}.
We answer these questions next.

\subsection{$\mathrm{DT}_{Q,\lambda}$ for quivers $Q$ and dimension vectors $\lambda$ so that $G_{Q,\lambda}=K_n^m$}
Fix positive integers $m$ and $k$. Fix a partition $\lambda=(\lambda_1\geq \cdots\geq \lambda_k>0)$.
Let $A$ be the $k\times k$ symmetric matrix with $a_{ii}=m+1$ for all $1\leq i\leq k$, and $a_{ij}=m$ for $1\leq i\neq j\leq k$.
Then $A$ determines a symmetric quiver $Q$ on $k$ vertices\footnote{This quiver is a subset of the class of \emph{almost $m$-regular quivers} \cite[Definition 6.3]{DFR21}.} with the property that the associated covering graph $G_{Q,\lambda}$ is $K_n^m$. Here $n=|\lambda|\coloneqq \sum_{1\leq i\leq k} \lambda_i$.

Our aim is to give explicit formulae for $\mathrm{DT}_{Q,\lambda}$ that generalize Reineke's formula in the $(m+1)$-loop quiver case.
By Theorem~\ref{thm:numerical_dt_break} we know that $\mathrm{DT}_{Q,\lambda}$ is the number of $S_{\lambda}$-orbits on $\brkd(K_{n}^m)$.
The polyhedral description of break divisors (as lattice points in trimmed permutahedra) does not lend itself well to a combinatorial analysis. To bypass this, we appeal to an alternative characterization for $\brkd(K_n^m)$ from \cite{KRT21} (building upon \cite{KST21,KT21}).

For $g\coloneqq g(K_n^m)$ consider
\begin{align}
\mc{D}_{m,n}\coloneqq \{(y_1,\dots,y_{n})\in \bZ_{\geq 0}^n\suchthat y_1+\cdots+y_n=g(\!\!\!\!\!\mod mn), 0\leq y_i\leq mn-1\}.
\end{align}
Note that $\mc{D}_{m,n}$ carries a commuting $S_n \times \mathbb{Z}/n \mathbb{Z}$-action where the $S_n$-action is the natural one and the $\mathbb{Z}/n \mathbb{Z}$-action is given by adding $m$ modulo $mn$ to all coordinates.
As shown in \cite{KRT21}, break divisors of $K_n^m$ are distinguished representatives under the $\bZ/n\bZ$ action, and the number of $S_n$-orbits on $\brkd(K_n^m)$ can thus be computed by counting $S_n\times \bZ/n\bZ$ orbits on $\mc{D}_{n,m}$. This is precisely the strategy followed in \cite{KRT21}.

To compute $\mathrm{DT}_{Q,\lambda}$, we thus need to compute $S_{\lambda}\times \bZ/n\bZ$ orbits on $\mc{D}_{m,n}$.
We begin by recording some handy elementary number-theoretic facts that will play a crucial role in our analysis.
Let $\mu$ and $\phi$ denote the M\"{o}bius and Euler totient functions respectively.
Let $C_d(b)$ denote the Ramanujan sum as in \cite[\S 3]{KRT21}:
\begin{align}
C_d(b)\coloneqq \sum_{\substack{1\leq k\leq d\\ \gcd(k,d)=1}}e^{2\pi ikb/d}=\mu\left(\frac{d}{\gcd(b,d)}\right)\frac{\phi(d)}{\phi\left(\frac{d}{\gcd(b,d)}\right)}.
\end{align}
The following `orthogonality' result of Cohen \cite[Equation 1.2]{Coh59} (attributed to Carmichael) is pertinent.
For the reader comparing our approach here to that in \cite{KRT21}, we stress the fact that we did not need this property (and its consequences) in arriving at the results in \emph{loc. cit.}.

\begin{proposition}
\label{prop:cohen}
Fix positive integers $p$ and $q$.
Consider divisors $d$ and $e$ of $q$.
Then
\begin{align*}
\sum_{\substack{a+b=p(\!\!\!\!\! \mod q)\\ 0\leq a,b\leq q-1}}C_d(a)C_e(b)=\left\lbrace \begin{array}{ll}0 & d\neq e,\\ qC_d(p) & d=e.\end{array}\right.
\end{align*}
\end{proposition}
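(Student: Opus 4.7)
The plan is to prove this identity by expanding both Ramanujan sums in terms of $q$-th roots of unity (using the fact that $d, e$ divide $q$) and then applying character orthogonality modulo $q$.

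First I would rewrite the Ramanujan sum in a ``$q$-adapted'' form. Since $d \mid q$, the substitution $k' = kq/d$ sets up a bijection between $\{k \in [1,d] : \gcd(k,d) = 1\}$ and $\{k' \in [0,q-1] : \gcd(k',q) = q/d\}$. Under this substitution, $e^{2\pi i k a/d} = e^{2\pi i k' a/q}$, so that
\begin{equation*}
C_d(a) = \sum_{k': \gcd(k',q) = q/d} \zeta_q^{k' a}, \qquad \text{where } \zeta_q = e^{2\pi i/q}.
\end{equation*}
An identical expression holds for $C_e(b)$ with $e$ in place of $d$.

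Next I would substitute these expressions into the left-hand side and exchange the order of summation. The inner object to analyze is
\begin{equation*}
\sum_{\substack{a+b \equiv p \pmod{q}\\ 0 \le a,b \le q-1}} \zeta_q^{k' a + \ell' b}.
\end{equation*}
For each $a \in \{0, \ldots, q-1\}$ there is a unique admissible $b$, namely $b \equiv p - a \pmod{q}$. The exponent becomes $k'a + \ell'(p-a) = \ell' p + (k' - \ell') a$ modulo $q$, so the inner sum equals $\zeta_q^{\ell' p} \sum_{a=0}^{q-1} \zeta_q^{(k'-\ell')a}$. By orthogonality of characters of $\mathbb{Z}/q\mathbb{Z}$, this last sum equals $q$ if $k' \equiv \ell' \pmod{q}$ and $0$ otherwise. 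Since both $k'$ and $\ell'$ lie in $\{0, \ldots, q-1\}$, the congruence becomes equality.

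Finally, I would observe that the constraint $k' = \ell'$ forces $\gcd(k',q) = \gcd(\ell',q)$, i.e., $q/d = q/e$, so $d = e$. Thus if $d \neq e$ the double sum vanishes identically, proving the first case. If $d = e$, surviving terms correspond to a single index $k' = \ell'$ ranging over $\{k' : \gcd(k',q) = q/d\}$, contributing
\begin{equation*}
\sum_{k' : \gcd(k',q) = q/d} q \, \zeta_q^{k' p} = q \, C_d(p),
\end{equation*}
where the last equality invokes the $q$-adapted expression for $C_d(p)$ derived at the outset. This matches the claim. I do not anticipate a substantial obstacle here; everything reduces to bookkeeping once the Ramanujan sums are transported to roots of unity of common order $q$, which is the essential step that linearizes the convolution condition $a+b \equiv p \pmod{q}$.
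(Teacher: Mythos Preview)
Your proof is correct. The paper does not actually supply a proof of this proposition; it simply records it as a known orthogonality relation, citing Cohen \cite[Equation 1.2]{Coh59} (who in turn attributes it to Carmichael). Your direct argument via lifting both Ramanujan sums to $q$-th roots of unity and invoking character orthogonality on $\mathbb{Z}/q\mathbb{Z}$ is the standard route and goes through without difficulty; the only cosmetic point is that in the bijection $k \mapsto k' = kq/d$ one should read the target modulo $q$ (so that, e.g., when $d=1$ the image $k'=q$ is identified with $0$, where indeed $\gcd(0,q)=q=q/d$), but this does not affect the argument.
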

As an immediate corollary we obtain the following result that allows for substantial simplification of expressions involving Ramanujan sums.
\begin{corollary}
\label{cor:general_cohen}
Fix positive integers $p$ and $q$.
Consider a tuple $(d_1,\dots,d_k)$ of positive integers such that $d_i|q$ for all $1\leq i\leq k$.
Then
\begin{align*}
\sum_{\substack{a_1+\cdots + a_k=p(\!\!\!\!\! \mod q)\\ 0\leq a_i\leq q-1}}\prod_{1\leq i\leq k}C_{d_i}(a_i)=\left\lbrace \begin{array}{ll}q^{k-1}C_d(p) & d\coloneqq d_1=\cdots=d_k,\\0 & \text{otherwise.}\end{array}\right.
\end{align*}
\end{corollary}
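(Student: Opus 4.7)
The plan is to prove Corollary~\ref{cor:general_cohen} by induction on $k$, with Proposition~\ref{prop:cohen} serving as the engine that drives the induction.

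For the base case $k=1$, exactly one value of $a_1 \in \{0, \dots, q-1\}$ satisfies $a_1 \equiv p \pmod{q}$, so the sum collapses to a single term $C_{d_1}(a_1)$ for that value. Since $C_d(b)$ depends on $b$ only modulo $d$, and $d_1 \mid q$, this equals $C_{d_1}(p)$, matching $q^{0} C_{d_1}(p)$ as claimed.

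For the inductive step, I would peel off the last variable by writing
\[
\sum_{\substack{a_1+\cdots+a_k \equiv p \pmod{q} \\ 0 \leq a_i \leq q-1}} \prod_{i=1}^{k} C_{d_i}(a_i) = \sum_{b=0}^{q-1} C_{d_k}(b) \sum_{\substack{a_1+\cdots+a_{k-1} \equiv p-b \pmod{q} \\ 0 \leq a_i \leq q-1}} \prod_{i=1}^{k-1} C_{d_i}(a_i).
\]
If $d_1, \dots, d_{k-1}$ are not all equal, the inductive hypothesis kills the inner sum, so the whole expression vanishes; this is consistent with the claim, since in that case $d_1, \dots, d_k$ are not all equal either. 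Otherwise, set $d' = d_1 = \cdots = d_{k-1}$; the induction hypothesis rewrites the right hand side as
\[
q^{k-2} \sum_{\substack{a+b \equiv p \pmod{q} \\ 0 \leq a, b \leq q-1}} C_{d'}(a)\, C_{d_k}(b),
\]
and Proposition~\ref{prop:cohen} evaluates this two-variable sum to $q \cdot C_{d'}(p)$ when $d_k = d'$ and to $0$ otherwise. Multiplying through yields $q^{k-1} C_{d'}(p)$ precisely when $d_1 = \cdots = d_k = d'$, completing the induction.

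The only points requiring care are that each $d_i$ divides $q$ (so that the periodicity of $C_{d_i}$ is compatible with reduction modulo $q$) and that $d_k \mid q$ is in force when Proposition~\ref{prop:cohen} is invoked in the final step. I anticipate no genuine obstacle here; the proof is a short telescoping induction on the number of variables, and the entire content of the corollary is already encoded in the two-variable orthogonality supplied by Proposition~\ref{prop:cohen}.
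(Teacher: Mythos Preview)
Your proposal is correct and follows exactly the approach the paper takes: the paper's proof reads in its entirety ``The case $k=1$ is vacuously true. The case $k=2$ is Proposition~\ref{prop:cohen}. The remaining argument is a straightforward induction.'' You have simply written out the details of that straightforward induction, peeling off one variable and invoking Proposition~\ref{prop:cohen} at the two-variable stage.
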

\begin{proof}
The case $k=1$ is vacuously true. The case $k=2$ is Proposition~\ref{prop:cohen}. The remaining argument is a straightforward induction.
\end{proof}

We consider a slightly more general setup.
For $0\leq s\leq mn-1$, define
\begin{align}
\label{eq:general_dmnrs}
\mc{D}_{m,n,r,s}\coloneqq \{(y_1,\dots,y_{r})\suchthat y_1+\cdots+y_r=s(\!\!\!\!\!\mod mn), 0\leq y_i\leq mn-1\},
\end{align}
and let $
\mc{D}_{m,n,s}\coloneqq \mc{D}_{m,n,n,s}.$
Irrespective of $s$, the Young subgroup $S_{\lambda}$ acts on $\mc{D}_{m,n,s}$.
Denote the number of orbits under this action by $O_{m,\lambda,s}$.
We give a formula for $O_{m,\lambda,s}$ using Ramanujan sums.

\begin{proposition}
\label{prop:d_orbits}
Let $\gcd(\lambda)\coloneqq \gcd(\lambda_1,\dots,\lambda_k)$.
We have that
\[
O_{m,\lambda,s}=\frac{1}{mn}\sum_{d|\gcd(\lambda)}C_d(s)\prod_{1\leq i\leq k}\binom{\frac{mn+\lambda_i}{d_i}-1}{\frac{\lambda_i}{d_i}}.
\]
\end{proposition}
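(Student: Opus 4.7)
The plan is to recast the orbit count directly in terms of multisets and then use the roots-of-unity filter. Since $S_\lambda$ permutes coordinates within the natural blocks, an $S_\lambda$-orbit on $\{0,\ldots,mn-1\}^n$ is uniquely encoded by a $k$-tuple $(M_1,\ldots,M_k)$ where $M_i$ is a size-$\lambda_i$ multisubset of $\{0,\ldots,mn-1\}$; because the total coordinate sum is constant on orbits, $O_{m,\lambda,s}$ equals the number of such tuples whose combined sum is $\equiv s \pmod{mn}$. No invocation of Burnside is needed thanks to this bijection.

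Setting $\zeta = e^{2\pi i/(mn)}$ and letting $A_r(v) = [u^r]\prod_{j=0}^{mn-1}(1-uv^j)^{-1}$ denote the generating function for size-$r$ multisets from $\{0,\ldots,mn-1\}$ by sum, the filter yields
\[
O_{m,\lambda,s} \;=\; \frac{1}{mn}\sum_{t=0}^{mn-1}\zeta^{-ts}\prod_{i=1}^{k} A_{\lambda_i}(\zeta^t).
\]
The crucial technical step is to evaluate $A_r(\zeta^t)$. For each $t$, set $d = mn/\gcd(t,mn)$, so that $\zeta^t$ is a primitive $d$-th root of unity. The identity
\[
\prod_{j=0}^{mn-1}(1-u\zeta^{tj}) \;=\; \Bigl(\textstyle\prod_{j=0}^{d-1}(1-u\zeta^{tj})\Bigr)^{mn/d} \;=\; (1-u^d)^{mn/d}
\]
forces $A_r(\zeta^t) = \binom{(mn+r)/d - 1}{r/d}$ when $d\mid r$ and $0$ otherwise. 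Collecting the $\phi(d)$ values of $t$ that realize a fixed $d\mid mn$, the accompanying factor $\sum_t \zeta^{-ts}$ collapses to the Ramanujan sum $C_d(s)$, giving for one block
\[
N_{r,s} \;=\; \frac{1}{mn}\sum_{d\mid \gcd(r,mn)}\binom{(mn+r)/d-1}{r/d}\,C_d(s).
\]

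Finally I would assemble the $k$ blocks via convolution. Writing $O_{m,\lambda,s} = \sum_{s_1+\cdots+s_k \equiv s \pmod{mn}}\prod_i N_{\lambda_i,s_i}$ and substituting each $N_{\lambda_i,s_i}$ produces a sum over tuples $(d_1,\ldots,d_k)$ with $d_i\mid \gcd(\lambda_i,mn)$ of a convolution $\sum_{\sum s_i \equiv s}\prod_i C_{d_i}(s_i)$. Corollary~\ref{cor:general_cohen} annihilates every term with unequal $d_i$ and reduces the rest to $(mn)^{k-1}C_d(s)$; combined with the observation $\gcd(\lambda)\mid |\lambda| = n\mid mn$ (which ensures $d\mid \gcd(\lambda)$ automatically implies $d\mid mn$), the surviving range becomes $d\mid \gcd(\lambda)$ and the stated formula emerges. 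The main obstacle is the root-of-unity evaluation together with the careful bookkeeping that translates $\gcd(t,mn)$ into the Ramanujan-sum index $d$ and ensures the three divisibility conditions on $d$ simplify to $d \mid \gcd(\lambda)$; once this is clean, Cohen's orthogonality does the rest.
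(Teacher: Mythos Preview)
Your argument is correct and follows essentially the same path as the paper: decompose into the $k$ blocks of sizes $\lambda_1,\dots,\lambda_k$, use the single-block orbit count (which the paper simply cites as \cite[Lemma~3.1]{KRT21} and you rederive via the roots-of-unity filter), then convolve over the block sums $(a_1,\dots,a_k)$ and apply Corollary~\ref{cor:general_cohen} to collapse to the diagonal $d_1=\cdots=d_k$.

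One remark worth making: your own displayed identity
\[
O_{m,\lambda,s}=\frac{1}{mn}\sum_{t=0}^{mn-1}\zeta^{-ts}\prod_{i=1}^{k}A_{\lambda_i}(\zeta^{t})
\]
already finishes the proof directly, without passing through $N_{r,s}$ or Cohen's orthogonality at all. Grouping the $t$'s by the order $d$ of $\zeta^{t}$, the product $\prod_i A_{\lambda_i}(\zeta^{t})$ vanishes unless $d\mid\lambda_i$ for every $i$, i.e.\ unless $d\mid\gcd(\lambda)$, and for such $d$ the sum over the corresponding $t$'s of $\zeta^{-ts}$ is exactly $C_d(s)$. So the convolution step and Corollary~\ref{cor:general_cohen} are in fact a detour in your own argument; the paper needs them only because it treats the single-block formula as a black box.
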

\begin{proof}
One may decompose $(y_1,\dots,y_n)\in \mc{D}_{m,n,s}$ as a concatenation of $k$ sequences: $(y_1,\dots,y_{\lambda_1})$, $(y_{\lambda_1+1},\dots,y_{\lambda_1+\lambda_2}),\dots$, $(y_{\lambda_1+\cdots+\lambda_{k-1}+1},\dots,y_{n})$.
Thus, abusing notation slightly, we may interpret $\mc{D}_{m,n,s}$ as being given by the disjoint decomposition:
\begin{align}
\label{eq:decompose_lattice_points}
\mc{D}_{m,n,s}=\displaystyle\bigsqcup_{\substack{a_1+\cdots+a_k=s(\!\!\!\!\!\mod mn)\\ 0\leq a_i\leq mn-1}}\displaystyle\bigsqcup_{1\leq i\leq k}\mc{D}_{m,n,\lambda_i,a_i}.
\end{align}
For any choice of $(a_1,\dots,a_k)$, the symmetric group $S_{\lambda_i}$ acts on the $\mc{D}_{m,n,\lambda_i,a_i}$.
The orbits under this $S_{\lambda_i}$-action on $\mc{D}_{m,n,\lambda_i,a_i}$ are indexed by multisets of cardinality $\lambda_i$ with elements drawn from $\{0,\dots,mn-1\}$ such that the sum of the elements in $a_i$ modulo $mn$. We know \cite[Lemma 3.1]{KRT21}  that this quantity equals
\[
\frac{1}{mn}\sum_{d_i|\gcd(\lambda_i,mn)} \binom{\frac{mn+\lambda_i}{d_i}-1}{\frac{\lambda_i}{d_i}}C_{d_i}(a_i).
\]
From the decomposition in~\eqref{eq:decompose_lattice_points} it then follows that
\begin{align}
O_{m,\lambda,s}=\frac{1}{m^kn^{k}}\sum_{\substack{a_1+\dots+a_k=s(\!\!\!\!\!\mod mn)\\ 0\leq a_i\leq mn-1}}\sum_{\substack{(d_1,\dots,d_k)\\ d_i|\gcd(\lambda_i,mn)}}\prod_{1\leq i\leq k}\binom{\frac{mn+\lambda_i}{d_i}-1}{\frac{\lambda_i}{d_i}}C_{d_i}(a_i),
\end{align}
which by changing the order of summation may be rewritten as
\begin{align}
O_{m,\lambda,s}=\frac{1}{m^kn^{k}}\sum_{\substack{(d_1,\dots,d_k)\\ d_i|\gcd(\lambda_i,mn)}}\prod_{1\leq i\leq k}\binom{\frac{mn+\lambda_i}{d_i}-1}{\frac{\lambda_i}{d_i}}\sum_{\substack{a_1+\dots+a_k=s(\!\!\!\!\!\mod mn)\\ 0\leq a_i\leq mn-1}}\prod_{1\leq i\leq k}C_{d_i}(a_i).
\end{align}
\noindent By Corollary~\ref{cor:general_cohen} the inner summand simplifies and we get the desired expression.
\end{proof}

We record the result that we care about.

\begin{corollary}
\label{cor:more_DTs}
Let $\mc{D}_{m,n}\coloneqq \mc{D}_{m,n,g(K_n^m)}$.
Then we have the equality
\[
\mathrm{DT}_{Q,\lambda}=\frac{1}{mn^2}\sum_{d|\gcd(\lambda)}(-1)^{mn+\frac{mn}{d}}\mu(d)\prod_{1\leq i\leq k}\binom{\frac{mn+\lambda_i}{d}-1}{\frac{\lambda_i}{d}}.
\]
\end{corollary}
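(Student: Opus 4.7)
The plan is to apply Burnside's lemma together with Proposition~\ref{prop:d_orbits}, exploiting a numerical coincidence peculiar to $g \coloneqq g(K_n^m) = (n-1)(mn-2)/2$. Specifically, I will argue that the orbit-counting sum collapses: only the identity shift $\tau^0$ contributes a nonzero fixed-point count, so the answer is $O_{m,\lambda,g}/n$, and then a separate number-theoretic identity converts Ramanujan sums at $g$ into the M\"obius expression in the statement.

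By Theorem~\ref{thm:numerical_dt_break} combined with the $\brkd(K_n^m) \leftrightarrow \mc{D}_{m,n}/(\bZ/n\bZ)$ bijection from \cite{KRT21}, $\mathrm{DT}_{Q,\lambda}$ equals the number of orbits of the commuting actions of $S_\lambda$ and $\bZ/n\bZ$ on $\mc{D}_{m,n} = \mc{D}_{m,n,g}$. Burnside applied to the $\bZ/n\bZ$-action on the set of $S_\lambda$-orbits gives $\mathrm{DT}_{Q,\lambda} = \frac{1}{n}\sum_{j=0}^{n-1}N_j$, where $N_j$ counts $S_\lambda$-orbits fixed by $\tau^j$. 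Identifying an $S_\lambda$-orbit with a tuple of multisets $(M_1,\ldots,M_k)$ of sizes $\lambda_1, \ldots, \lambda_k$ in $\bZ/mn\bZ$, the orbit is $\tau^j$-fixed iff each $M_i$ is a union (with multiplicities) of $\langle\tau^j\rangle$-orbits in $\bZ/mn\bZ$; setting $e \coloneqq e_j = n/\gcd(j,n)$, this forces $e \mid \gcd(\lambda)$.

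I would then argue that $N_j = 0$ whenever $e_j > 1$. Each $\langle\tau^j\rangle$-orbit has size $e$ and sum $ea + jm\binom{e}{2}$ for a representative $a$, so a $\tau^j$-fixed multi-multiset has total sum $\equiv eS + jm(e-1)n/2 \pmod{mn}$ for some $S \in \bZ$; matching this against $g$ forces the congruence $g \equiv jm(e-1)n/2 \pmod e$. Exploiting $\gcd(n-1, e) = 1$ and $\gcd(mn-2, e) = \gcd(2, e)$, a short parity analysis yields $g \bmod e \in \{1,\, 1 + e/2\}$ (the second value arising only when $e$ is even with both $m$ and $n/e$ odd), while $jm(e-1)n/2 \bmod e \in \{0,\, e/2\}$. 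Comparing these cases --- the only subtle subcase being $e = 2$ with $m, n/2$ both odd, where $\gcd(j,n) = n/2$ forces $j = n/2$ to be odd --- one sees the congruence fails for every $e > 1$.

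Consequently $\mathrm{DT}_{Q,\lambda} = N_0/n = O_{m,\lambda,g}/n$, so Proposition~\ref{prop:d_orbits} supplies $\mathrm{DT}_{Q,\lambda} = \frac{1}{mn^2}\sum_{d \mid \gcd(\lambda)} C_d(g) \prod_i \binom{(mn+\lambda_i)/d - 1}{\lambda_i/d}$. The proof concludes with the number-theoretic identity $C_d(g) = (-1)^{mn + mn/d}\mu(d)$ for every $d \mid n$. This is a direct unwinding of $C_d(g) = \mu(d/\gcd(g,d))\phi(d)/\phi(d/\gcd(g,d))$ via the same parity sub-cases as before; the nontrivial evaluation is the case $d \equiv 2 \pmod 4$ with $m(n/d)$ odd, where $\gcd(g,d) = 2$ and both sides collapse to $\mu(d/2)$. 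The main obstacle is orchestrating the parity bookkeeping cleanly; everything else assembles routinely from Proposition~\ref{prop:d_orbits} and this identity.
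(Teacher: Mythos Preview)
Your proof is correct and follows the same overall skeleton as the paper: identify $\mathrm{DT}_{Q,\lambda}$ with the number of $S_\lambda \times \bZ/n\bZ$-orbits on $\mc{D}_{m,n}$, show this equals $O_{m,\lambda,g}/n$, then apply Proposition~\ref{prop:d_orbits} together with the evaluation $C_d(g)=(-1)^{mn+mn/d}\mu(d)$. The difference is one of explicitness. The paper handles the middle step and the Ramanujan-sum identity by citation to \cite{KRT21} (where the freeness of the $\bZ/n\bZ$-action on $S_n$-orbits is established; note that freeness on $S_n$-orbits automatically implies freeness on $S_\lambda$-orbits, since a $\tau^j$-fixed $S_\lambda$-orbit would sit inside a $\tau^j$-fixed $S_n$-orbit). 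You instead supply a direct Burnside argument together with a self-contained parity analysis showing $N_j=0$ for $e_j>1$, and then verify the Ramanujan-sum identity case by case. Your route is therefore more self-contained but not genuinely different; the added value is that your congruence argument on $g \pmod{e}$ makes transparent exactly why only the identity shift contributes, which the paper leaves implicit in the citation.
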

\begin{proof}
As argued earlier, the number of orbits under $S_{\lambda}$-action on $\brkd(K_n^m)$ equals $O_{m,\lambda,g}/n$. 
By Proposition~\eqref{prop:d_orbits} this equals
\begin{align}
\label{eq:more general DT}
\frac{1}{mn^2}\sum_{d|\mathrm{gcd}(\lambda)}(-1)^{mn+\frac{mn}{d}}\mu(d)\prod_{1\leq i\leq k}\binom{\frac{mn+\lambda_i}{d}-1}{\frac{\lambda_i}{d}}.
\end{align}
Here we have used the equality $C_d(g)=(-1)^{mn+\frac{mn}{d}}\mu(d)$,  implicit in \cite[Section 3]{KRT21}.
\end{proof}
Observe that the integrality of $\mathrm{DT}_{Q,\lambda}$ from the aforementioned expression is not obvious.
We can also straightaway derive a host of equalities that may be of independent interest.
\begin{enumerate}
\item If $\lambda=(1^n)$, then the quantity in \eqref{eq:more general DT} equals $m^{n-1}n^{n-2}$, as it should. Indeed this is the number of spanning trees in $K_n^m$.
\item If $\lambda=(n)$, then the quantity in \eqref{eq:more general DT} equals the numerical DT-invariant of the $(m+1)$-loop quiver as in \eqref{eq:explicit_m_loop_quiver}.
\item If $\lambda=(n-1,1)$ for $n\geq 2$, then we are considering the $S_{n-1}\times S_1$ action on $\brkd(K_n^m)$.
The quantity in \eqref{eq:more general DT} only sees a contribution from $d=1$, and thus equals
\begin{align*}
\frac{1}{mn^2}\binom{mn+n-2}{n-1}\binom{mn+1-1}{1}=\frac{1}{n}\binom{mn+n-2}{n-1}
\end{align*}
When $m=1$, this becomes the $(n-1)$th Catalan number equaling $\frac{1}{n}\binom{2n-2}{n-1}$.
This is not surprising.
Indeed, by Berget--Rhoades \cite{BR14}, the restriction of the $S_n$ action on break divisors on $K_n$ to $S_{n-1}\times S_1$ recovers the classical parking function representation, whose orbit count is well known to be given by the Catalan numbers.
\item More generally, if $\lambda=(\lambda_1,\dots,\lambda_k)\vdash n$ is such that $\gcd(\lambda)=1$, then
\eqref{eq:more general DT} gives a product formula:
\[
\frac{1}{mn^2}\prod_{1\leq i\leq k}\binom{mn+\lambda_i-1}{\lambda_i}.
\]
\end{enumerate}

\begin{example}
\emph{
We return to Example~\ref{ex:motivation_for_more_DTs} to complete that link. The monomial symmetric function expansion for $\frob(\mc{P}(K_4^3))$ equals
\[
432m_{(1,1,1,1)} + 234m_{(2,1,1)} + 126m_{(2,2)} + 91m_{(3,1)} + 28m_{(4)}.
\]
Let us verify that the expression in Corollary~\ref{cor:more_DTs} does indeed match for $\lambda={(2,2})$. We get
\[
\left(\binom{12+2-1}{2}^2-\binom{7-1}{1}^2\right)/48=126,
\]
which agrees with the coefficient of $m_{(2,2)}$ in $\frob(\mc{P}(K_4^3))$.
}
\end{example}

\section*{Acknowledgements}
V.T. is extremely grateful to Matja\v{z} Konvalinka for earlier collaboration.
Thanks also to Spencer Backman, Chris Eur, Philippe Nadeau, Marino Romero, and Chi Ho Yuen for enlightening conversations/correspondence that have influenced, directly or indirectly, the results obtained in this work.

\bibliographystyle{hplain}
\bibliography{Biblio_PS}

\end{document}